\author{Nina Lebedeva}
\address{\tiny{Saint Petersburg State University, 7/9 Universitetskaya nab., St. Petersburg, 199034, Russia}}
\address{\tiny{St. Petersburg Department of V.A. Steklov Institute of Mathematics of the Russian Academy of Sciences, 27 Fontanka nab., St. Petersburg, 191023, Russia}}
\email{lebed@pdmi.ras.ru}
\author{Artem Nepechiy}
\address{\scriptsize{Institut für Mathematik, Universität Augsburg, Universitätsstraße 14, 86159 Augsburg, Germany}}
\email{artem.nepechiy@math.uni-augsburg.de}
\subjclass[2020]{53C30, 57S05, 58D05}
\keywords{Riemannian manifolds of low regularity, locally homogeneous spaces}
\title{Locally homogeneous $C^0$-Riemannian manifolds}
\newtheorem{defi}{Definition}[section]
\newaliascnt{thm}{defi}
\newtheorem{thm}[thm]{Theorem}
\newaliascnt{prop}{defi}
\newtheorem{prop}[prop]{Proposition}
\newaliascnt{cor}{defi}
\newtheorem{cor}[cor]{Corollary}
\newaliascnt{lem}{defi}
\newtheorem{lem}[lem]{Lemma}
\DeclareMathOperator{\Iso}{\operatorname{Iso}}
\newtheorem*{prop*}{Proposition}
\newtheorem*{Main Theorem}{Main Theorem}
\newtheorem*{Corollary}{Corollary}
\begin{document}
	
	\begin{abstract}
		We show that locally homogeneous $\mathcal{C}^0$-Riemannian manifolds are smooth.
	\end{abstract}
	\maketitle
	\section{Introduction}
	In this paper we prove that if a $\mathcal{C}^0$-Riemannian manifold is locally homogeneous, then it is indeed smooth, more precisely we obtain the following theorem:
	\begin{Main Theorem}[Local homogeneity implies smoothness]
		Let $(M,g_0)$ be a locally homogeneous $\mathcal{C}^0$-Riemannian manifold and denote by $d_{g_0}$ the induced metric, then $(M,d_{g_0})$ is isometric to a smooth Riemannian manifold.
	\end{Main Theorem}
	In fact we show that for any point there is a small neighborhood $U$, such that the set of local isometries on $U$, which will be denoted by $U_G$, forms a local Lie group with Lie algebra $\mathfrak{g}$ acting transitively on $U$. The isotropy local isometries determine a local Lie group $U_H$ with Lie algebra  $\mathfrak{h}$ and $U$ is isometric to the coset space $U_G/U_H$ carrying an invariant metric with respect to the left action of $U_G$ (for definitions see \cites{spiro1993remark,spiro1992lie,mostow1950extensibility,MR1088511}).\par
	In particular all spaces appearing in the main theorem are determined by Lie algebras $\mathfrak{g} \supset \mathfrak{h}$ together with a scalar product $\langle \cdot , \cdot \rangle$ on $\mathfrak{g}/\mathfrak{h}$, which is skew symmetric with respect to the adjoint action of $\mathfrak{h}$ on $\mathfrak{g}/\mathfrak{h}$ \cite{spiro1993remark}. Thus, they are given by purely algebraic data. Moreover, this implies that $M$ and its Riemannian metric are real analytic. \par

	Our result in some sense generalizes the Meyers-Steenrod theorems \cite{MR1503467},
	which in particular asserts that the isometry group of a smooth Riemannian manifold is a Lie group.
	Several theorems are known in that direction: Metric spaces with geometric assumptions such as curvature conditions imply regularity of the isometry group.
	For example isometry groups of Alexandrov spaces or RCD*(K,N) spaces are known to be Lie groups \cite{MR1273464} \cite{MR3914958}.

	The question "When is a homogeneous/locally homogeneous space a smooth manifold?" has been investigated in \cites{MR997464,pediconi2019compactness,Pediconi_2020}.

	In \cite{MR997464}[Theorem 7] Berestovskii studied when a globally homogeneous inner metric space is isometric to a homogeneous Riemannian manifold. His findings show
	in particular that a homogeneous Alexandrov space is in fact a smooth Riemannian manifold. In contrast to that we obtain  a theorem of a local nature.
	One can show (using \cites{MR2262730,MR3925112} for upper curvature bounds and \cite{MR1274133} for lower bounds)
	that a locally homogeneous space
	with an upper or lower curvature bound in the sense of Alexandrov is a
	$\mathcal{C}^0$-Riemannian manifold. Hence our main theorem implies:
	\begin{Corollary}
		Let $X$ be a locally homogeneous, locally compact, length space of finite Hausdorff dimension. If there exists a point together with a convex neighbourhood admitting a curvature bound from either above or below in the sense of Alexandrov, then $X$ is isometric to a smooth Riemannian manifold.
	\end{Corollary}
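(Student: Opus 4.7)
My plan is to reduce the corollary to the Main Theorem by showing that, under the given hypotheses, $X$ is a $\mathcal{C}^0$-Riemannian manifold; the Main Theorem then upgrades this to smoothness automatically.

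As a first step I would propagate the curvature bound. By assumption some $p_0 \in X$ has a convex neighbourhood $V$ on which $X$ has curvature bounded above (respectively below) by some $\kappa \in \mathbb{R}$ in the sense of Alexandrov. Since local isometries preserve both convexity and Alexandrov curvature bounds, local homogeneity provides, for each $q \in X$, a neighbourhood isometric to an open subset of $V$, and hence a convex neighbourhood of $q$ with the same curvature bound. Thus $X$ is a locally compact, locally homogeneous, locally CAT$(\kappa)$ (respectively CBB$(\kappa)$) length space of finite Hausdorff dimension.

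The second step is to produce the Riemannian structure. In the lower-bound case the results of \cite{MR1274133} supply a dense open $\mathcal{C}^0$-Riemannian regular set in any finite-dimensional Alexandrov space; in the upper-bound case the analogous manifold-plus-continuous-metric conclusion follows from \cite{MR2262730,MR3925112}. Because being a regular point is a local-isometry invariant, local homogeneity promotes a single regular point to every point of $X$. Hence $X$ carries a $\mathcal{C}^0$-Riemannian structure whose induced distance agrees with the original length metric on a neighbourhood of each point, and the Main Theorem concludes the proof.

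The step I expect to be the main obstacle is the careful verification that the $\mathcal{C}^0$-Riemannian structure supplied by the cited regularity theorems genuinely extends across every point once every point is regular, and that it recovers the original length distance on a full neighbourhood, not merely on a dense set. Closely related is the need for a globally well-defined topological dimension; this follows from the local-isometry invariance of Hausdorff dimension combined with local homogeneity, and is what justifies applying the Main Theorem uniformly on all of $X$.
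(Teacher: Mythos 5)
Your reduction is exactly the one the paper intends: the introduction states that a locally homogeneous space with an Alexandrov curvature bound is a $\mathcal{C}^0$-Riemannian manifold (citing the same references for the upper- and lower-bound cases), and the Corollary then follows from the Main Theorem. Your two-step fleshing-out---propagate the curvature bound from the given convex neighbourhood to all of $X$ via the local isometries, then use local homogeneity to promote the existence of a single regular point (guaranteed by the cited structure theorems) to the statement that every point is regular, so that the $\mathcal{C}^0$-Riemannian structure and its induced distance are defined globally---matches the argument the paper sketches but does not spell out. One small caveat: the set on which the Otsu--Shioya metric tensor is constructed is dense and of full measure, but their continuity statements are a.e.\ rather than pointwise-open; this does not affect your argument, because once local homogeneity forces every point to be of the same (regular) type, continuity at one point gives continuity everywhere, which is precisely the step you flag as the delicate one.
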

	It would be interesting to obtain a full description of locally homogeneous, locally compact length 
	spaces similar to \cite{MR997464} without assuming any regularity on the metric and topology. This could be considered as a metric version of the Bing-Borsuk conjecture \cite{MR2488667}.              
	
	There exist different results in the local setting, however they are making stronger assumptions on the regularity of the manifold.
	In \cite{MR131248}
	Singer showed: If a complete, simply connected Riemannian manifold is curvature
	homogeneous and the derivatives of the curvature tensor agree up to some order 
	at all points, then the manifold is globally homogeneous.
	If the Riemannian metric is complete and sufficiently smooth, the conclusion of our main theorem follows from this result. While the proof is essentially local and completeness does not play a central role, it relies heavily on the existence of high order derivatives of the metric\cites{MR131248,MR1088511}.\par
	Lately local versions with lower regularity have been obtained by Pediconi \cites{pediconi2019compactness,Pediconi_2020} with different additional assumptions on the space and the group action.\par
	Riemannian manifolds with low regularity do not satisfy classical results in Riemannian geometry: There is no meaningful notion of curvature and shortest curves do not need to solve a differential equation, they may branch and the injectivity radius may be zero \cite{MR38111}. Shortest curves do not even need to be $\mathcal{C}^1$ \cite{MR38111}. We refer to \cite{MR3358543} for some basic properties of $\mathcal{C}^0$-Riemannian manifolds and to \cites{MR283727, MR1109664,MR1318157} for further results. \par
	
	A metric space $M$ is called locally homogeneous if the pseudogroup of local isometries acts transitively on $M$.
	One important problem and the difference to the non-local case (as considered by Berestovskii) is that the pseudogroup  of local isometries is a priori not known to be a local group. The technical tool to overcome this obstacle is to extend local isometries, defined on arbitrary small balls, to balls of fixed radius. Once the pseudogroup is established to be a local group, one can apply structure theory of locally compact groups \cites{MR2680491,MR1464908} to show that it is a local Lie group. We then construct a local isometry between our metric space $M$ and a local quotient of the local group equipped with an invariant Riemannian metric.\par

	The paper is organized as follows: In \autoref{sec: Preliminiaries} we fix notation, explain what a $C^0$-Riemannian manifold is and give definitions and notions regarding local groups. In \autoref{sec:Ismetry extensions} we prove that every local isometry can be extended to an isometry of fixed size.
	In \autoref{subseq: local lie group structure} we explain how to obtain a local topological group and prove that some restriction is a local Lie group. After that we will explain how to obtain a left-invariant metric on the quotient, which is isometric to some open subset of $M$.
	
	\textbf{Acknowledgements.} The main theorem was motivated by research conducted in \cite{bohm2019optimal}, our solution allows for simplifications therein. We are grateful to Christoph Böhm for introducing us (via Alexander Lytchak) to the problem. We would like to warmly thank Alexander Lytchak for valuable comments, discussions and his support. \par

	The authors were partially supported by the DFG grant SPP 2026.
	\section{Preliminaries}\label{sec: Preliminiaries}
	\subsection{\texorpdfstring{$C^0$}{C0}-Riemannian manifolds}
	In this subsection we collect all definitions and results regarding $C^0$-Riemannian manifolds.
	\begin{defi}[$C^0$-Riemannian manifold]
		A $C^0$-Riemannian manifold is a pair $(M,g_0)$ consisting of a $\mathcal{C}^1$-manifold $M$ together with a continuous Riemannian metric $g_0$.
	\end{defi}
	The Riemannian metric $g_0$ induces a canonical length structure, which in turn induces an intrinsic metric $d_{g_0}$ on $M$. This allows us to formulate local homogeneity in purely metric terms. We denote open (closed) balls with radius $r$ around the point $x$ by $B_r(x)$ ($\overline{B}_r(x)$). 
	\begin{defi}[Local homogeneity]
		A metric space $M$ is called locally homogeneous if for every $x,y \in M$ there exists $r>0$ and
		an isometry $f:B_r(x) \rightarrow B_r(y)$ satisfying $f(x)=y$.
	\end{defi}
	We want to make frequent use of the upcoming lemma, which is implied by the $C^0$-Riemannian manifold structure.
	\begin{lem}[Maps are bilipshitz]\label{lem: Maps are bilip}
		Let $(M,g_0)$ be a $C^0$-Riemannian manifold then the coordinate maps are bilipshitz.
	\end{lem}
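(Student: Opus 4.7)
My plan is to reduce the statement to a local, coordinate-based claim: after fixing a point $p \in M$ and a $C^1$-chart $\varphi\colon U \to V\subset \mathbb{R}^n$ centered at $p$, I want to show that on a sufficiently small neighborhood of $p$, $\varphi$ is bilipschitz with respect to $d_{g_0}$ on the domain and the Euclidean metric on the image. Since bilipschitz is a local property and every point admits such a chart, this suffices.

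In the coordinates provided by $\varphi$, the metric $g_0$ is represented by a continuous, pointwise positive-definite symmetric matrix-valued function $(g_{ij}(x))$. Fix a closed Euclidean ball $\overline{B}_\rho(0)\subset V$; by continuity of the $g_{ij}$ and compactness of $\overline{B}_\rho(0)$, there exist constants $0<c\leq C<\infty$ with
$$
c\,|v|^2 \;\leq\; g_{ij}(x)\,v^i v^j \;\leq\; C\,|v|^2 \qquad \forall\, x\in \overline{B}_\rho(0),\ v\in\mathbb{R}^n.
$$
Integrating along any absolutely continuous curve $\gamma\colon [0,1]\to \varphi^{-1}(B_\rho(0))$ yields
$$
\sqrt{c}\, L_{\operatorname{eucl}}(\varphi\circ\gamma) \;\leq\; L_{g_0}(\gamma) \;\leq\; \sqrt{C}\, L_{\operatorname{eucl}}(\varphi\circ\gamma),
$$
which immediately shows that $\varphi$ is bilipschitz from $(\varphi^{-1}(B_\rho(0)), d^{\operatorname{loc}}_{g_0})$ to $(B_\rho(0),|\cdot|)$, where $d^{\operatorname{loc}}_{g_0}$ denotes the intrinsic distance computed using only curves that remain inside the chart.

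The main obstacle is that $d_{g_0}$ is the \emph{global} intrinsic distance on $M$, so a priori curves between two nearby points of the chart might leave the chart and be shorter than anything staying inside. I would handle this by shrinking the neighborhood: pick $\rho'<\rho$ and observe that any curve with endpoints in $\varphi^{-1}(\overline{B}_{\rho'}(0))$ that exits $\varphi^{-1}(B_\rho(0))$ must have its Euclidean projection traverse the annulus $\overline{B}_\rho(0)\setminus B_{\rho'}(0)$, and hence has $g_0$-length at least $\sqrt{c}\,(\rho-\rho')$. On the other hand, any two points of $\varphi^{-1}(\overline{B}_{\rho'}(0))$ are joined inside the chart by the Euclidean segment, so $d^{\operatorname{loc}}_{g_0}(x,y)\leq 2\sqrt{C}\,\rho'$. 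Choosing $\rho'$ small enough that $2\sqrt{C}\,\rho'<\sqrt{c}\,(\rho-\rho')$ forces any length-minimizing sequence of curves between such points to stay inside the chart, so $d_{g_0}=d^{\operatorname{loc}}_{g_0}$ on $\varphi^{-1}(\overline{B}_{\rho'}(0))$. Combining this identification with the length comparison above gives the desired bilipschitz bound for $\varphi$ restricted to a neighborhood of $p$.
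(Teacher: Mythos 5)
Your argument is correct and is the standard proof of this fact; the paper itself offers no proof but simply refers to a textbook reference, and the content of your argument (comparison of the coordinate quadratic form with the Euclidean one on a compact neighborhood, length comparison for curves, and the shrinking step to rule out curves escaping the chart) is precisely what that reference supplies. One small remark: the concluding reduction works because the Euclidean segment between $\varphi(x)$ and $\varphi(y)$ stays in $\overline{B}_{\rho'}(0)$ by convexity, so the competitor curve you use to rule out escaping curves is indeed available; with that observation the identification $d_{g_0}=d^{\mathrm{loc}}_{g_0}$ on $\varphi^{-1}(\overline{B}_{\rho'}(0))$ is fully justified.
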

	
	\begin{proof}
		Compare \cite{MR2216252}[Section 3.2].
	\end{proof}

	\subsection{Local topological groups}
	In this subsection we introduce, for the convenience of the reader, the basic definitions and notations regarding local groups. The exposition is mostly taken from \cite{MR2680491}.
	\begin{defi}[Local Group]\label{defi: local group}
		A local topological group $G=(G,\Omega,e, m, i)$ is a Hausdorff topological space $G$ together with a neutral element $e \in G$, a partially defined but continuous multiplication operation $ m : \Omega \rightarrow G$ for some open domain $\Omega \subset G \times G$, and a partially defined but continuous inversion operation $i: G \rightarrow G$ obeying the following axioms:
		\begin{enumerate}
			\item $\Omega$ is an open neighborhood of $G \times \lbrace 1 \rbrace \cup \lbrace 1 \rbrace \times G$.
			\item If $g,h,k \in G$ satisfy $m(g,h) , m(h,k) \in \Omega$ and $m(m(g , h) , k),m(m(g, m(h , k)) \in \Omega$ then  $m(m(g, h) , k)= m(g, m(h ,k))$.
			\item For all $g \in G$ one has $m(g, e) =g = m(e, g)$.
			\item If $g \in G$, then $m(g,i(g))=e=m(i(g),g)$.
		\end{enumerate}
	\end{defi}
	We will use the shorthand notation $g\cdot h$ for $m(g,h)$. We call an open neighborhood $U$ of $e \in G$ symmetric if it satisfies $U=i(U)$. Note that if $U$ is an arbitrary open neighborhood of $e$, then $U \cap i^{-1}(U)$ is an open symmetric neighborhood of $e$. If the local group $G$ is additionally a smooth manifold and the local group operations are smooth, then  we say that $G$ is a local Lie group. The basic example of a local group is the restriction of a topological group.
	
	\begin{defi}[Restriction of a local group]\label{def restriction of group}
		Let $G$ be a local topological group and $U$ a symmetric open neighborhood of the
		identity of $G$. We have a local group $G \vert_ U$, it has the subspace $U$ as underlying
		space, $e_G$ as its neutral element, the restriction of inversion to $U$ as its inversion,
		and the restriction of the product to
		$$ \Omega_U:= \lbrace (x,y) \in \Omega \cap (U \times U) : m(x,y) \in U \rbrace $$
		as its product. Such a local group $G \vert_ U$ is called a restriction of $G$.
	\end{defi}
	
	We want to define the notion when local topological groups are equivalent. For this we need the definition of local isomorphism.
	
	\begin{defi}[Locally isomorphic top. groups]
		Let $G=(G,\Omega, e,m,i)$ and $G'=(G',\Omega', e',m',i')$ be local topological groups. A morphism from $G$ to $G'$ is a continuous function $f:G \rightarrow G'$ such that
		\begin{enumerate}
			\item $f(e)=e'$ and $(f\times f) (\Omega) \subset \Omega'$. 
			\item $f(i(g)) = i'(f(g))$ for all $g \in G$.
			\item $f(m(g,h))= m'(f(g),f(h))$ for all $(g,h) \in \Omega$.
		\end{enumerate}
		
		We say $G$ and $G'$ are locally isomorphic if there exist open symmetric neighborhoods $U$ and $U'$ of $e$ and $e'$ in $G$ and $G'$ respectively, $f:U \rightarrow U'$ a homeomorphism and $f:G\vert_U \rightarrow G'\vert_{U'}, f^{-1}: G'\vert _ {U'} \rightarrow G \vert_U$ are morphisms.
	\end{defi}
	\section{Isometry Extensions}\label{sec:Ismetry extensions}
	The goal of this section is to obtain an extension property. That is local isometries defined on arbitrary balls can be extended to balls of fixed radius. More precisely we want to prove the following statement:
	\begin{prop}[Extension Property]\label{Prop: Extension Property} Let $(M,g_0)$ be a locally homogeneous $C^0$-Riemannian manifold. Then there exists an open ball $B_{r_0}(x_0) \subset M$ and $R>0$ such that for all points $x,y\in B_{r_0}(x_0)$ and all $r<R$ any isometry $ f: B_{r}(x)\to B_{r}(y)$ satisfying $f(x)=y$ can be extended uniquely to an isometry $F: B_{R}(x)\to B_{R}(y)$.
	\end{prop}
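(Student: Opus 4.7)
My plan is a continuation argument: build the maximal isometric extension of the given $f$ and show by contradiction that its radius cannot stop below a uniform threshold $R$. First I would fix $x_0\in M$ and a chart around $x_0$ so that, by \autoref{lem: Maps are bilip}, the metric $d_{g_0}$ is uniformly bilipschitz to the Euclidean metric on a compact neighborhood $K$ of $x_0$; in particular every local isometry with both source and target in $K$ is $C$-bilipschitz in the chart for a fixed constant $C$. Applying local homogeneity pointwise on $K$ and using compactness of $K\times K$, one extracts a uniform \emph{homogeneity radius}: there exist $r_0>0$ with $\overline{B}_{r_0}(x_0)\subset K$ and $\delta_0>0$ such that for every $x,y\in\overline{B}_{r_0}(x_0)$ one can pick an isometry $\phi_{x,y}:B_{\delta_0}(x)\to B_{\delta_0}(y)$ with $\phi_{x,y}(x)=y$.

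Next I would choose $R\ll\delta_0$ small enough that $B_R(x)\subset K$ for every $x\in B_{r_0}(x_0)$. The uniform bilipschitz bound makes any family of local isometries considered here equicontinuous, so Arzelà–Ascoli applies and uniform limits of isometries are again isometries. Given $f:B_r(x)\to B_r(y)$ with $r<R$, let $\rho^\star\le R$ be the supremum of radii to which $f$ admits an isometric extension; the supremum is attained by some $f^\star:B_{\rho^\star}(x)\to B_{\rho^\star}(y)$. Assuming for contradiction that $\rho^\star<R$, I would pick $z\in B_{\rho^\star}(x)$ with $d(x,z)$ close enough to $\rho^\star$ that $B_{\delta_0}(z)$ reaches strictly beyond $B_{\rho^\star}(x)$, and, by uniform homogeneity, select $\phi:B_{\delta_0}(z)\to B_{\delta_0}(f^\star(z))$ with $\phi(z)=f^\star(z)$.

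The main obstacle is the following \emph{matching step}: in general $\phi$ will disagree with $f^\star$ on the overlap $B_{\delta_0}(z)\cap B_{\rho^\star}(x)$, and one has to modify $\phi$ by post-composing with a local self-isometry $\psi$ of a small ball around $f^\star(z)$ (an ``isotropy local isometry'') so that $\psi\circ\phi=f^\star$ on a nonempty open subset of the overlap. Candidates for $\psi$ are built as compositions of the maps $\phi_{a,b}$ from the first step, together with their inverses, that fix $f^\star(z)$, and I would try to select the required one by a compactness/approximation argument within this family. This step is delicate precisely because $C^0$-Riemannian isometries admit no classical linearization at a point: one cannot simply match differentials, and one must show directly that the local isotropy generated from local homogeneity at $f^\star(z)$ is rich enough to absorb the discrepancy between $\phi$ and $f^\star$ there.

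Once $\psi\circ\phi=f^\star$ on a nonempty open set, a standard open-closed propagation argument based on equicontinuity forces agreement on the full connected overlap, so $f^\star$ and $\psi\circ\phi$ glue to an isometry whose domain strictly contains $B_{\rho^\star}(x)$; covering $\partial B_{\rho^\star}(x)$ by finitely many such local extensions and pasting via uniqueness produces an isometric extension on some $B_{\rho^\star+\eta}(x)$, contradicting maximality. The same open-closed propagation, applied to $F_1\circ F_2^{-1}$ which equals the identity on $f(B_r(x))$, yields the uniqueness of the extension $F$ claimed in the statement.
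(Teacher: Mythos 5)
Your high-level strategy (build a maximal isometric extension, argue by contradiction that its radius cannot stop short of $R$) is a reasonable continuation scheme, but the proposal leaves exactly the hard part unresolved, and its opening step is also flawed.

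First, the uniform homogeneity radius does not follow from compactness of $K\times K$. Local homogeneity gives, for each pair $(x,y)$, some radius $r(x,y)>0$, and by an Arzel\`a--Ascoli limit argument $r(\cdot,\cdot)$ is \emph{upper} semicontinuous; but upper semicontinuous positive functions need not be bounded away from zero on a compact set. The paper explicitly notes this and gets around it by a Baire category argument (\autoref{Lower domain bound}): the sets $\{r_x\ge 1/n\}$ are closed, so one of them has nonempty interior, and only on that smaller ball do you get a uniform radius. You should replace your compactness claim by this Baire step.

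Second, and more seriously, the ``matching step'' you flag as the main obstacle is not a technical wrinkle you can defer — it \emph{is} the proposition in disguise. You need $\psi=f^\star\circ\phi^{-1}$, a self-isometry that is a priori defined only on a lens-shaped piece of the overlap, to extend to a self-isometry of the full ball $B_{\delta_0}(f^\star(z))$; extending an isotropy isometry from a small domain to a larger one is exactly the same problem as extending $f$, so your argument is circular. The paper breaks this circularity by a genuinely different mechanism: it applies the Hilbert--Smith theorem (\autoref{Thm: Hilbert smith}) to make the pointed isometry groups $\operatorname{Iso}_x(B_r(x))$ into compact Lie groups, then observes that the decreasing chain $\operatorname{Iso}_x(B_{r-1/i}(x))$ of compact Lie groups must stabilize, which yields the small extension for all but countably many radii (\autoref{lem: Existence of local extensions I}, \autoref{cor: Existence of local extensions II}). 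Your proposed substitute — approximating $\psi$ by products of the $\phi_{a,b}$ via a compactness argument — gives no reason such a product should converge to the required map, nor that the isotropy at $f^\star(z)$ is ``rich enough''; that richness is precisely what needs to be established and it is not free.

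Third, your uniqueness argument is also incomplete as stated: openness of the coincidence set of $F_1\circ F_2^{-1}$ with the identity is not automatic for $C^0$-metrics, where isometries have no linearization. The paper proves uniqueness (\autoref{lem: Extensions are unique}) by observing that $\overline{\langle F_1\circ F_2^{-1}\rangle}$ is a compact Lie group fixing an open set and invoking Newman's theorem, which is exactly the input your open-closed sketch is missing. Finally, when you cover $\partial B_{\rho^\star}(x)$ by finitely many local extensions and ``paste via uniqueness,'' you need a path- or homotopy-independence argument to guarantee the pieces agree on overlaps that are not simply the nerve of a good cover; the paper handles this with the caterpillar construction and a homotopy-invariance lemma (\autoref{lem:Close path endpoint compatibilty}), relying on local simple connectivity.

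In short: the continuation skeleton is fine, but the concrete content of the proof lives in (a) Baire category to get the uniform radius, (b) Hilbert--Smith plus the stabilizing chain of compact Lie groups to get the small extension, (c) Newman's theorem for uniqueness, and (d) a homotopy argument for well-definedness of the large extension. Your proposal currently supplies none of these.
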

	Once this Proposition is established, we can define a local group structure on the set of local isometries. This will be carried out in \autoref{subseq: local lie group structure}.\par
	Moreover, a detailed analysis of our proof gives the following quantitative estimate on $R$: If $\rho>0$ satisfies the condition that every loop in $B_\rho(x)$ is contractible in $B_{3\rho}(x)$ and the closure $\overline{B}_{3\rho}(x)$ is compact then we have $R \geq \rho$.\par 
	Now we turn to the proof of \autoref{Prop: Extension Property}, it is subdivided into two steps executed in \autoref{small isometries} and \autoref{large extension} respectively.\par
	The first subsection deals with small extensions of isometries, meaning that given an isometry $f:B_r(x) \rightarrow B_r(y)$ it can be extended to an isometry $F:B_{r+\varepsilon}(x) \rightarrow B_{r+\varepsilon}(y)$, where $\varepsilon \ll r$. The main ingredient is the Lipschitz version of the Hilbert smith conjecture, which shows that the isometry groups, we are dealing with, are actually Lie groups. This enables us to formulate statements about extensions of isometries in terms of Lie groups.  \par
	The second subsection deals with large extensions of isometries, meaning that given an isometry $f:B_r(x) \rightarrow B_r(y)$ it can be extended to an isometry $F:B_{R}(x) \rightarrow B_{R}(y)$ where $r \ll R$. The main ingredient is to extend the isometry along paths using the results obtained in \autoref{small isometries}, thus proving \autoref{Prop: Extension Property}.

	\subsection{Small Isometry Extension}\label{small isometries}

	The goal of this subsection is to obtain:
	\begin{prop}[Existence of local Extension]\label{prop: Existence of local extensions III}
		Let $(M,g_0)$ be a locally homogeneous $C^0$-Riemannian manifold. Then there exists an open ball $B_{r_0}(x_0) \subset M$ and $R>0$ such that for all but countably many $r<R$ there is $\varepsilon_r>0$ such that every pointed isometry $ f: B_r(x) \rightarrow B_r(y)$ can be extended to an isometry $F:B_{r+\varepsilon_r}(x) \rightarrow B_{r+\varepsilon_r}(y)$ for every $x,y\in  B_{r_0}(x_0)$.
	\end{prop}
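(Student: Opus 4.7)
The plan is to endow the space of local isometries of radius $r$ with a compact Lie groupoid structure via the Lipschitz version of the Hilbert--Smith conjecture, and then exploit the monotonicity of an integer-valued dimension invariant in $r$ to pin down those radii at which every isometry admits a slightly larger extension.

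First, I would fix $x_0 \in M$ and choose $R > 0$ so small that $\overline{B_{3R}(x_0)}$ is compact and contained in one coordinate chart, and pick $r_0 \in (0, R/3)$. For $r \in (0, R)$, define
\[
\mathcal{I}_r := \left\{ f : \overline{B_r(x)} \to \overline{B_r(y)} \mid f \text{ isometry}, \ f(x)=y, \ x, y \in \overline{B_{r_0}(x_0)} \right\},
\]
equipped with the compact--open topology. Because elements of $\mathcal{I}_r$ are $1$-Lipschitz and everything is confined to a compact chart, Arzelà--Ascoli makes $\mathcal{I}_r$ compact. Partial composition turns $\mathcal{I}_r$ into a compact topological groupoid over $\overline{B_{r_0}(x_0)}$. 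The isotropy group $G_r^{x_0} := \{ f \in \mathcal{I}_r : f(x_0) = x_0 \}$ is a compact topological group acting faithfully on $B_r(x_0)$; by \autoref{lem: Maps are bilip} this action is by bi-Lipschitz homeomorphisms inside the chart, so the Lipschitz version of the Hilbert--Smith conjecture forces $G_r^{x_0}$ to be a compact Lie group. Using local homogeneity to transport this Lie structure from $x_0$ to every source point, $\mathcal{I}_r$ becomes a Lie groupoid of some dimension $d(r) \in \mathbb{Z}_{\geq 0}$.

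Next, I would analyse how $d$ varies with $r$. For $r < r'$ the restriction $\rho_{r', r} : \mathcal{I}_{r'} \to \mathcal{I}_r$, $f \mapsto f|_{\overline{B_r(x)}}$, is a continuous morphism of Lie groupoids, so $d(r') \leq d(r)$. Hence $d : (0, R) \to \mathbb{Z}_{\geq 0}$ is non-increasing, bounded, and integer-valued, therefore locally constant on the complement of a countable set $E \subset (0, R)$. For any $r \notin E$ I would choose $\varepsilon_r > 0$ small enough that $d$ is constant on $[r, r+\varepsilon_r]$; then the fibres of $\rho_{r+\varepsilon_r,r}$ are zero-dimensional and its image has full dimension $d(r)$.

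The principal obstacle is to deduce from $d(r) = d(r+\varepsilon_r)$ that $\rho_{r+\varepsilon_r, r}$ is actually \emph{surjective}. Compactness makes its image closed in $\mathcal{I}_r$, and the dimension count makes it open, hence a union of connected components of $\mathcal{I}_r$. Since it contains the unit section $x \mapsto \mathrm{id}_{B_r(x)}$, the remaining issue is to show that every pointed isometry in $\mathcal{I}_r$ can be joined inside $\mathcal{I}_r$ to such a unit. Here local homogeneity and the pseudogroup structure enter most crucially: they produce, for any $f \in \mathcal{I}_r$, a continuous path of pointed isometries from $f$ to an identity obtained by translating along a curve connecting source and target through $B_{r_0}(x_0)$, each member of which lies in the same component as a unit and is therefore extendable. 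Verifying this path-connectedness at the good radii is the technical heart of the argument and the step most sensitive to the weak $\mathcal{C}^0$ regularity of the metric.
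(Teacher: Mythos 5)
Your proposal shares the key mechanism with the paper — invoke the Lipschitz Hilbert--Smith theorem to put a Lie structure on the local isometries, then exploit monotonicity of an integer invariant in $r$ to isolate the bad radii — but as written it has three genuine gaps, one of which is a dead end.

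First, you never run a Baire category argument. The paper's Lemma~\ref{Lower domain bound} is what guarantees, after shrinking to a good ball, a uniform lower bound $R$ so that for \emph{all} $x,y$ in that ball there is some isometry $B_R(x)\to B_R(y)$; this is exactly what makes your groupoid $\mathcal{I}_r$ transitive over $\overline{B}_{r_0}(x_0)$ for every $r<R$. Without it, for a fixed $r$ there may simply be no isometry between $B_r(x)$ and $B_r(y)$ for some pairs, so $\mathcal{I}_r$ is not a Lie groupoid over the whole base and the step ``transport the Lie structure from $x_0$ to every source point'' as well as the later ``translate along a curve'' both fail.

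Second, the inequality $d(r')\le d(r)$ for $r<r'$ does not follow from the restriction map being merely a continuous morphism; you need it to be \emph{injective}. Injectivity is precisely the paper's uniqueness-of-extensions statement (Lemma~\ref{lem: Extensions are unique}), proved via the Newmann theorem that a compact Lie group action cannot fix an open set. Your proposal neither states nor proves uniqueness, so the dimension monotonicity is unjustified.

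Third, and most seriously, the surjectivity step via path-connectedness cannot work as stated. You claim every $f\in\mathcal{I}_r$ can be joined to a unit by a path inside $\mathcal{I}_r$; this would say $\mathcal{I}_r$ is connected, which is false in general, since the isotropy group $\operatorname{Iso}_{x_0}(B_r(x_0))$ may have several components (think of any finite or disconnected orthogonal isotropy). Translating along a curve from $x$ to $y$ joins $f$ to \emph{some} element of the isotropy at the endpoint, not to the identity, and that element may lie in a different component. Equality of dimensions only makes the image open and closed; it leaves the $\pi_0$ ambiguity unresolved. The paper sidesteps this entirely by working pointwise with the nested decreasing sequence of compact Lie groups $L_i=\operatorname{Iso}_x(B_{r-1/i}(x))$: such a chain must stabilize (first dimension, then the finite component group), so stabilization controls both dimension and $\pi_0$ simultaneously. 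The extension property is then transported between basepoints by conjugating with the Baire-produced isometries. If you want to keep the groupoid picture, you would at minimum have to add a second monotone integer invariant (the number of components of a source fiber, or of the whole compact groupoid) and throw away the countably many radii where either invariant jumps; the path-connectedness claim should be abandoned.
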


	The first step is to obtain a well-behaved subset of $M$ on which we will define the local group of isometries. Observe that for arbitrary $x,y \in M$ and the pointed isometry $f:B_r(x) \rightarrow B_r(y)$, coming from the local homogeneity condition, there can in general be no lower bound for $r>0$. Using the Baire category theorem we want to find $B_{r_0}(x_0)$ such that for all $x,y \in B_{r_0}(x_0)$ one has a lower bound on $r>0$.
	\begin{lem}[Lower domain bound]\label{Lower domain bound} Denote by $(M,g_0)$ a locally homogeneous $C^0$-Riemannian manifold. Then there exists an open ball $B_{r_0}(x_0)$ and $R>0$ such that for all points $x,y\in B_{r_0}(x_0)$ there exists an isometry $f_{xy}: B_{R}(x)\to B_{R}(y)$ satisfying $f(x)=y$.
	\end{lem}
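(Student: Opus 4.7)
The strategy is to apply the Baire category theorem relative to an arbitrarily chosen base point and then use composition to handle arbitrary pairs of points.

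Fix any $p \in M$ and, for each positive integer $n$, set
\[
F_n := \bigl\{\, y \in M \;:\; \text{there is a pointed isometry } f : B_{1/n}(p) \to B_{1/n}(y)\,\bigr\}.
\]
Local homogeneity immediately gives $M = \bigcup_n F_n$. The heart of the argument is to show that each $F_n$ is closed in $M$. Once this is established, local compactness of the $C^1$-manifold $M$ makes it a Baire space, so some $F_N$ has nonempty interior; pick $B_{r_0}(x_0) \subset F_N$ and set $R := 1/N$. Then for every $y \in B_{r_0}(x_0)$ there is a pointed isometry $f_y : B_R(p) \to B_R(y)$, and for any pair $x,y \in B_{r_0}(x_0)$ the composition $f_{xy} := f_y \circ f_x^{-1} : B_R(x) \to B_R(y)$ is the desired pointed isometry.

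To prove $F_n$ is closed, let $y_k \in F_n$ with $y_k \to y$ and fix pointed isometries $f_k : B_{1/n}(p) \to B_{1/n}(y_k)$. By \autoref{lem: Maps are bilip} the coordinate charts are bilipschitz, so closed balls of sufficiently small radius are compact; after passing to a larger $n$ if necessary (harmless, since the final Baire step only needs one $N$ to have nonempty interior) we may assume $\overline{B}_{1/n}(p)$ is compact and all the $f_k$ take values in a fixed compact neighborhood of $y$ for large $k$. Each $f_k$ is $1$-Lipschitz, so exhausting $B_{1/n}(p)$ by $\overline{B}_{1/n - 1/m}(p)$ and applying Arzelà--Ascoli with a diagonal extraction produces a uniform subsequential limit $f : B_{1/n}(p) \to M$, which is automatically an isometric embedding sending $p$ to $y$ and with image contained in $B_{1/n}(y)$. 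For surjectivity, given $z \in B_{1/n}(y)$ choose $\varepsilon > 0$ with $d(z,y) < 1/n - 2\varepsilon$; for $k$ large one has $d(z, y_k) < 1/n - \varepsilon$, so $z = f_k(u_k)$ for some $u_k \in \overline{B}_{1/n - \varepsilon}(p)$, and a compactness subsequence $u_k \to u$ combined with $f_k \to f$ uniformly forces $f(u) = z$. Hence $y \in F_n$.

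The main obstacle is verifying the closedness of $F_n$: Arzelà--Ascoli automatically delivers only an isometric embedding, and surjectivity onto the open ball around the limit point is not formal. The two-sided argument above, exploiting both the surjectivity of each $f_k$ onto $B_{1/n}(y_k)$ and the strict inequality $d(z,y) < 1/n$ together with the convergence $y_k \to y$, is the essential technical point; everything else (Baire and composition) is then routine.
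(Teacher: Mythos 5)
Your overall strategy — Baire on the sets $F_n$ of points reachable from a fixed $p$ by a pointed isometry on a $1/n$-ball, then composing $f_y \circ f_x^{-1}$ — is exactly the paper's argument. You go further than the paper, which simply asserts that its analogous sets $\mathcal{F}_{1/n}$ are closed; you actually attempt the Arzelà--Ascoli proof, and your surjectivity argument for the limit map is correct.

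There is, however, a soft spot in your justification of the compactness needed for Arzelà--Ascoli. You write that ``after passing to a larger $n$ if necessary \dots all the $f_k$ take values in a fixed compact neighborhood of $y$ for large $k$,'' as though this follows from choosing $n$ large and from the bilipschitz-chart lemma. But the radius below which closed balls around a given point are compact depends on that point; since you are trying to show that $F_n$ is closed in all of $M$ for a single fixed $n$, the limit point $y$ could a priori lie somewhere where no ball of radius $1/n$ has compact closure, and then ``passing to a larger $n$'' mid-argument is not available — you would be proving closedness of a different $F_{n'}$. The correct way to rescue your sentence is to notice that the isometries themselves transfer compactness: for each $\varepsilon>0$ the set $\{\,z : d(z,y_k)\le 1/n-\varepsilon\,\}$ equals $f_k(\{\,u : d(u,p)\le 1/n-\varepsilon\,\})$, hence is compact as the continuous image of a compact set, and since $y_k\to y$ one gets $\{\,z:d(z,y)\le 1/n-2\varepsilon\,\}\subset\{\,z:d(z,y_k)\le 1/n-\varepsilon\,\}$ for $k$ large, giving the fixed compact set you need. (Alternatively, do what the paper does and restrict Baire to a compact ball $\overline{B}$ around $p$ chosen small enough that a slightly larger ball is still compact; then all the relevant images stay in that larger compact set.) With that repair, your proof of closedness is complete and the rest of the argument is routine, as you say.
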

	\begin{proof}
		Fix some point $x_0 \in M$ and consider a closed compact ball $\overline{B}$ containing $x_0$, then by local homogeneity for every $x \in \overline{B}$ there exists maximal $r_x>0$ and an isometry $f_x:B_{r_x}(x_0) \rightarrow B_{r_x}(x)$ satisfying $f(x_0)=x$. Define for $n \in \mathbb{N}$ the set
		$$ \mathcal{F}_{\frac{1}{n}} = \left \lbrace x \in \overline{B}  :  r_x \geq \frac{1}{n} \right  \rbrace. $$
		By the above one has $\overline{B}= \bigcup_{n \in \mathbb{N}} \mathcal{F}_{\frac{1}{n}}$ and each $\mathcal{F}_{\frac{1}{n}}$ is closed. 
		Therefore Baire's category theorem \cite{MR3728284}[Theorem 48.2] implies that for some $m \in \mathbb{N}$ the set $\mathcal{F}_{\frac{1}{m}}$ has non-empty interior. Thus, there is a point $x_0$ and a radius $r_0>0$ such that $B_{r_0}(x_0) \subset \mathcal{F}_{\frac{1}{m}}$. Now $f_{xy} := f_y \circ f_x^{-1}$ yields the desired map.
	\end{proof}
	Since the Hilbert-Smith theorem will be used frequently, we recall it for the convenience of the reader.
	\begin{thm}[Hilbert-Smith theorem \cite{MR1464908}]\label{Thm: Hilbert smith}
		If $G$ is a locally compact group, which acts effectively by Lipschitz homeomorphisms on a Riemannian manifold, then $G$ is a Lie group.
	\end{thm}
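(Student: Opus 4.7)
The statement is the Lipschitz version of the Hilbert--Smith conjecture (due to Repov\v s--\v S\v cepin). My plan is to reduce it to ruling out effective $p$-adic actions and then obtain a contradiction by comparing cohomological dimension with Hausdorff dimension.

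\textbf{Step 1 (Reduction to $\mathbb{Z}_p$).} By the Gleason--Yamabe theorem, if the locally compact group $G$ is not a Lie group, then every neighbourhood of the identity contains a nontrivial compact subgroup (``no small subgroups'' fails). Invoking the Montgomery--Zippin structure theory, one can extract from $G$ a closed subgroup isomorphic to the $p$-adic integers $\mathbb{Z}_p$ for some prime $p$, and the effectiveness of the $G$-action passes (after a harmless reduction) to an effective Lipschitz action of $\mathbb{Z}_p$ on the Riemannian manifold $M$. So it suffices to show that $\mathbb{Z}_p$ cannot act effectively by Lipschitz homeomorphisms on a Riemannian manifold of finite dimension $n$.

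\textbf{Step 2 (Smith theory lower bound).} Now I would apply Yang's theorem from Smith cohomology theory: if $\mathbb{Z}_p$ acts effectively on an $n$-dimensional (cohomology) manifold, then the orbit space $M/\mathbb{Z}_p$ has integral cohomological dimension at least $n+2$. In particular its covering dimension is at least $n+2$.

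\textbf{Step 3 (Hausdorff dimension upper bound).} The key use of the Lipschitz hypothesis: compactness of $\mathbb{Z}_p$ lets one fix a small relatively compact open $U \subset M$ on which every group element acts with a uniform Lipschitz constant $L$. An orbit in $U$ then has diameter at most $L$ times a single element's displacement, so one can cover $U/\mathbb{Z}_p$ by images of $r$-balls from $U$ with multiplicity controlled by packing estimates. This shows that $U/\mathbb{Z}_p$, in its quotient metric, has Hausdorff dimension at most $n$. By the Szpilrajn--Marczewski inequality, its covering dimension is likewise at most $n$, contradicting Step 2.

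\textbf{Main obstacle.} The delicate point is Step 3: one needs the orbit map to be Lipschitz with control uniform in the group element, which requires knowing that a neighbourhood basis of $e \in \mathbb{Z}_p$ acts by maps with uniformly bounded Lipschitz constant on a fixed precompact open set. This is where the hypothesis that $G$ acts by Lipschitz homeomorphisms (and not merely by homeomorphisms) enters in an essential way; the classical topological Hilbert--Smith conjecture is open precisely because this uniform Lipschitz control is unavailable. Once this quantitative control is secured, the Smith-theoretic obstruction of Step 2 closes the argument.
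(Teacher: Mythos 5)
The paper does not prove \autoref{Thm: Hilbert smith}; it is recalled as a known theorem and attributed to Repov\v{s}--\v{S}\v{c}epin \cite{MR1464908}, so there is no in-paper proof to compare against. Your sketch reproduces the architecture of that reference: reduce, via Gleason--Yamabe and the Montgomery--Zippin structure theory of locally compact groups, to excluding an effective $\mathbb{Z}_p$-action on a Riemannian $n$-manifold; invoke Yang's Smith-theoretic result that such an action would force the orbit space to have integral cohomological dimension at least $n+2$; and use the Lipschitz hypothesis to produce a compatible metric on the orbit space of Hausdorff dimension at most $n$, which bounds the covering and hence cohomological dimension by $n$ via Szpilrajn's inequality, yielding the contradiction. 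One refinement is needed in your Step 3: compactness of $\mathbb{Z}_p$ by itself does not furnish a uniform Lipschitz constant, since $g \mapsto \operatorname{Lip}(g|_U)$ is a priori unbounded even on a compact group. The standard remedy is a Baire-category argument applied to the closed sets $\{g \in \mathbb{Z}_p : \operatorname{Lip}(g|_U) \leq n\}$, which cover $\mathbb{Z}_p$; some such set has nonempty interior, hence contains a coset of an open subgroup $p^k\mathbb{Z}_p \cong \mathbb{Z}_p$, and one replaces $\mathbb{Z}_p$ by that subgroup before running the Hausdorff-dimension estimate. You correctly flag uniformity as the delicate point, but the proposal should secure it by Baire rather than attribute it to compactness alone; with that repair, the outline matches the cited proof.
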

	\begin{lem}[Pointed isometries form a Lie group]\label{lem: Pointed isometries are Lie group}
		In the situation of \autoref{Prop: Extension Property} the group of pointed isometries
		$$ \operatorname{Iso}_x(B_r(x)) := \lbrace f:B_r(x) \rightarrow B_r(x) : f \text{ isometry, }  f(x)=x \rbrace $$
		is a compact Lie group.
	\end{lem}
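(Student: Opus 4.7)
I would equip $\Iso_x(B_r(x))$ with the topology of uniform convergence on compact subsets of $B_r(x)$; continuity of composition and inversion in this topology makes the set into a topological group, since the composition and inverse of a pointed isometry are again pointed isometries. Shrinking the radius if necessary, which is permitted by the setup of \autoref{Prop: Extension Property}, I may assume that $\overline{B}_r(x)$ is compact and contained in a single coordinate chart; then \autoref{lem: Maps are bilip} provides a bilipschitz equivalence between $d_{g_0}$ and the Euclidean chart metric on this set.

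Compactness would then follow from the Arzel\`a--Ascoli theorem. Every element of $\Iso_x(B_r(x))$ is $1$-Lipschitz, so the family is equicontinuous, and for each $z \in B_r(x)$ the orbit $\{f(z): f\in\Iso_x(B_r(x))\}$ lies in the $d_{g_0}$-sphere of radius $d_{g_0}(x,z)$ about $x$, which is pre-compact by the chart estimate. Pre-compactness of the family in the compact-open topology is then automatic. Closedness requires a little care: if $f_n\to f$ uniformly on compact subsets, then $f$ is $1$-Lipschitz with $f(x)=x$, and applying the same Arzel\`a--Ascoli argument to the sequence of pointed isometric inverses $f_n^{-1}$ yields, along a subsequence, a $1$-Lipschitz pointed map $g$ satisfying $f\circ g = g\circ f = \mathrm{id}$ on $B_r(x)$; hence $f$ is a bijective isometry.

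The Lie group statement would follow from the Hilbert--Smith theorem (\autoref{Thm: Hilbert smith}). The action of $\Iso_x(B_r(x))$ on $B_r(x)$ is effective by definition. By \autoref{lem: Maps are bilip}, each group element is Lipschitz with respect to the Euclidean chart metric and therefore with respect to any smooth Riemannian metric on $M$, where a compatible smooth structure on the $C^1$-manifold $M$ is furnished by Whitney's theorem. Compactness established above gives local compactness, so Hilbert--Smith produces the Lie group structure.

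The main obstacle I anticipate is the closedness step. Uniform limits of $1$-Lipschitz maps are $1$-Lipschitz, but surjectivity onto the \emph{open} ball $B_r(x)$ is not immediate; one cannot simply pass to the closure and restrict. The remedy is the simultaneous Arzel\`a--Ascoli extraction for the inverses sketched above, which forces the limit to be invertible. Everything else is a standard assembly of equicontinuity, pointwise boundedness, and a direct appeal to Hilbert--Smith.
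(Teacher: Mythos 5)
Your proof follows the same two-step strategy as the paper: establish compactness of $\operatorname{Iso}_x(B_r(x))$ in the compact-open topology, then invoke the Hilbert--Smith theorem (\autoref{Thm: Hilbert smith}) to obtain the Lie group structure. The only difference is that the paper cites a reference for compactness, whereas you supply the (correct) Arzel\`a--Ascoli argument yourself, including the closedness step via simultaneous extraction on inverses; the shrinking of $r$ you mention is unnecessary for the argument, but harmless.
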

	\begin{proof}
		By \cite{MR1393941}[Corollary 4.8] we have that $\operatorname{Iso}_x(B_R(x))$ is compact. Moreover, the isometry group $\operatorname{Iso}_x(B_R(x))$ of $B_R(x)$ acts effectively by Lipschitz homeomorphisms on an open set of $\mathbb{R}^n$, therefore $\operatorname{Iso}_x(B_R(x))$ is a a Lie group by the Hilbert-Smith theorem (\autoref{Thm: Hilbert smith}).
	\end{proof}
	
	\begin{lem}[Uniqueness of Extensions]\label{lem: Extensions are unique}
		Let $(M,g_0)$ be a $C^0$-Riemannian manifold, $r>0$ and $f:B_r(x) \rightarrow B_r(y)$ an isometry satisfying $f(x)=y$. If there exists $\varepsilon>0$ and extensions $F,G:B_{r+\varepsilon}(x) \rightarrow B_{r+\varepsilon}(y)$ of $f$, then one has $F=G$.
	\end{lem}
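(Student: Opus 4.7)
Set $h := G^{-1}\circ F\colon B_{r+\varepsilon}(x) \to B_{r+\varepsilon}(x)$; this is an isometry with $h(x)=x$ which restricts to the identity on $B_r(x)$. The claim $F=G$ is equivalent to showing $h = \mathrm{id}$ on the entire domain. By continuity of $h$, the fixed-point set is closed, so $h$ already fixes $\overline{B_r(x)}\cap B_{r+\varepsilon}(x)$ pointwise, and I will propagate this all the way to $B_{r+\varepsilon}(x)$.

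Define $r^*:=\sup\{s\in[r,r+\varepsilon]:h|_{B_s(x)}=\mathrm{id}\}$, so $r^* \geq r$. The plan is to show $r^* = r+\varepsilon$ by contradiction. Assume $r^* < r+\varepsilon$; by continuity $h$ is then the identity on $\overline{B_{r^*}(x)}$. For each point $q$ with $d(x,q)=r^*$ choose a small $\rho>0$ with $B_\rho(q)\subset B_{r+\varepsilon}(x)$; then $h|_{B_\rho(q)}$ is a pointed isometry of $B_\rho(q)$ fixing $q$ and coinciding with the identity on the non-empty open subset $B_\rho(q)\cap B_{r^*}(x)$. The aim is to conclude $h|_{B_\rho(q)}=\mathrm{id}$. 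Since the sphere $\{d(x,\cdot)=r^*\}$ is compact (using the local bilipschitz structure provided by \autoref{lem: Maps are bilip}) one may then choose a uniform $\rho_0>0$, producing $h|_{B_{r^*+\rho_0}(x)}=\mathrm{id}$ and contradicting the maximality of $r^*$.

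The main obstacle is exactly the local rigidity statement above: that an element of the compact Lie group $\operatorname{Iso}_q(B_\rho(q))$ (provided by \autoref{lem: Pointed isometries are Lie group}) which agrees with the identity on a non-empty open subset must itself be the identity. In the smooth setting this is immediate, since the fixed-point set of a non-trivial Riemannian isometry is a totally geodesic submanifold of strictly lower dimension. In the present $C^0$ setting one can argue via the metric differential of the pointed isometry at its fixed point: if $h$ is the identity on an open set whose closure contains $q$, then its metric differential at $q$, an orthogonal transformation of $T_qM$ with respect to $g_0(q)$, must fix every tangent direction pointing into that set, and any orthogonal linear map which agrees with the identity on a non-empty open cone of directions is the identity. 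A $C^1$-type regularity result for isometries between $C^0$-Riemannian manifolds, in the spirit of Calabi--Hartman, then upgrades this infinitesimal identity to a local one for $h$, completing the argument.
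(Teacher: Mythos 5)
You set up the right object $h = G^{-1}\circ F$ and propose a sound topological scaffold (propagate the identity outward along a supremum of radii). However, the crucial step in your argument --- that a pointed isometry of a $C^0$-Riemannian ball which fixes a point $q$ and agrees with the identity on a non-empty open set must itself be the identity near $q$ --- is precisely the content of the lemma at a smaller scale, and your proposed justification does not close the gap. A Calabi--Hartman-type regularity theorem (isometries are $C^1$, or have metric differentials everywhere) is \emph{not} available for merely continuous metrics; the known versions (Calabi--Hartman, Shefel, Taylor) require at least H\"older regularity, and the paper itself emphasizes that even geodesics of a $C^0$ metric need not be $C^1$. More importantly, even granting a metric differential $dh_q = \mathrm{id}$, the further deduction that $h \equiv \mathrm{id}$ near $q$ is the local $1$-jet rigidity of isometries, which for smooth metrics rests on the exponential map; for $C^0$ metrics there is no exponential map, and this rigidity is precisely what is at issue. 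So the argument quietly assumes what it is trying to prove.

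The paper obtains the rigidity in one stroke from Newman's theorem: the closure $\overline{\langle G^{-1}\circ F\rangle}$ is a compact Lie group (by \autoref{lem: Pointed isometries are Lie group}) acting on the manifold $B_{r+\varepsilon}(x)$ with every element fixing the open set $B_r(x)$; Newman's theorem (\cite{MR0413144}[Theorem 9.5]) states that the fixed-point set of a non-trivial compact Lie group acting effectively on a connected manifold has empty interior, so the group is trivial and $F=G$. This input is purely topological and sidesteps all regularity questions about the metric. Your propagation scheme would in fact go through if you substituted Newman's theorem for the differentiability appeal at the local-rigidity step, but then it would just be a longer version of the paper's one-paragraph argument. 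As written, the proposal has a genuine gap.
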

	\begin{proof}
		Consider the isometry
		$$H: B_{r+\varepsilon}(x) \rightarrow B_{r+\varepsilon}(x), z \mapsto G^{-1} \circ F (z).$$
		The group $ \overline{\langle H \rangle} \subset \operatorname{Iso}_x(B_{r+\varepsilon}(x))$ generated by $H$ is a compact Lie Group, since it is a closed subgroup of a compact Lie group by \autoref{lem: Pointed isometries are Lie group}. Observe that all elements of $\overline{\langle H \rangle}$ fix the open set $B_r(x)$. This is a contradiction to the Newmann Theorem \cite{MR0413144}[Theorem 9.5], since the fixed point set of a compact Lie group cannot contain an open set.\par
	\end{proof}
	\begin{lem}[Local extensions of isometry groups]\label{lem: Existence of local extensions I}
		Denote by $B_{r_0}(x_0)$ the set coming from \autoref{Lower domain bound}, then for every $x \in B_{r_0}(x_0)$ and every sufficiently small $r>0$ there exists $R>0$ such that for all $\varepsilon_1< \varepsilon_2<R$ one has
		$$\operatorname{Iso}_x(B_{r- \varepsilon_1}(x))=\operatorname{Iso}_x(B_{r-\varepsilon_2}(x)) .$$
	\end{lem}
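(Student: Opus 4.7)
The plan is to exploit the fact that each $G_s:=\operatorname{Iso}_x(B_s(x))$ is a compact Lie group (by \autoref{lem: Pointed isometries are Lie group}) and that the natural restriction maps between these groups are injective (by \autoref{lem: Extensions are unique}), reducing the claim to the elementary observation that a monotone, integer--valued function on a bounded interval is locally constant away from a discrete set of jumps.

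First I would take $r$ small enough that $G_s$ is a compact Lie group for every $s\in(0,r]$. For $0<t<s\leq r$, the restriction $\rho_{s,t}:G_s\to G_t$, $f\mapsto f|_{B_t(x)}$, is a continuous group homomorphism; it is injective because two elements of $G_s$ having the same restriction would be two extensions to $B_s(x)$ of a common pointed isometry, contradicting \autoref{lem: Extensions are unique}. Since $G_s$ is compact, $\rho_{s,t}(G_s)$ is closed in $G_t$, so by Cartan's closed subgroup theorem it is a Lie subgroup of $G_t$. These identifications are compatible ($\rho_{s_1,t}=\rho_{s_2,t}\circ\rho_{s_1,s_2}$ for $t<s_2<s_1$), hence for every $s_1<s_2$ in $(0,r]$ we obtain $G_{s_2}\subseteq G_{s_1}$ as closed Lie subgroups.

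Consequently $s\mapsto\dim G_s$ is a non-increasing, bounded, integer-valued function on $(0,r]$; in particular it is constant on some left neighborhood $(r-R_1,r)$ of $r$. Throughout this interval, for $s_1<s_2$ the inclusion $G_{s_2}^0\subseteq G_{s_1}^0$ is an inclusion of connected Lie groups of equal dimension, hence an equality. Therefore $s\mapsto|\pi_0(G_s)|=[G_s:G_s^0]$ is itself non-increasing, bounded, and integer-valued on $(r-R_1,r)$, and so will stabilise on a possibly smaller interval $(r-R,r)$.

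On $(r-R,r)$ the inclusions $G_{s_2}\subseteq G_{s_1}$ identify compact Lie groups having equal dimension \emph{and} equal number of connected components, which forces $G_{s_1}=G_{s_2}$. Taking $s_i=r-\varepsilon_i$ with $\varepsilon_1<\varepsilon_2<R$ yields the desired equality $\operatorname{Iso}_x(B_{r-\varepsilon_1}(x))=\operatorname{Iso}_x(B_{r-\varepsilon_2}(x))$. I do not expect a serious obstacle; the only delicate point is confirming that once the dimensions stabilise the identity components actually coincide, but this is immediate from the classical fact that a connected closed subgroup of a connected Lie group of the same dimension equals the whole group.
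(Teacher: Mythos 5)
Your proof is correct and follows essentially the same route as the paper: both reduce the claim to stabilisation of a decreasing chain of compact Lie groups $\operatorname{Iso}_x(B_s(x))$, using \autoref{lem: Extensions are unique} for injectivity of the restriction maps and \autoref{lem: Pointed isometries are Lie group} for compactness and the Lie structure. The paper simply asserts that a nested decreasing sequence of compact Lie groups must stabilise; your dimension-then-component-count argument supplies the justification the paper leaves implicit.
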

	\begin{proof}
		By \autoref{lem: Extensions are unique} we have a natural inclusion $\operatorname{Iso}_x(B_{r+\varepsilon}(x)) \hookrightarrow \operatorname{Iso}_x(B_r(x)) $ just by restricting the maps to the smaller domain. Thus $L_i:=\operatorname{Iso}_x(B_{r- \frac{1}{i}}(x))$ defines by \autoref{lem: Pointed isometries are Lie group} a sequence of compact Lie groups satisfying $L_{i+1} \subset L_{i}$. Hence the $L_i$ must stabilize, meaning there exists $N \in \mathbb{N}$ such that $L_n=L_m$ for all $n,m \geq N$. This proves statement.\par
	\end{proof}
	\begin{cor}[Local extensions of isometry groups are homogeneous]\label{cor: Existence of local extensions II}
		Denote by $B_{r_0}(x_0)$ the set coming from \autoref{Lower domain bound}, then there is $R>0$ such that for all but countably many $R>r>0$ there exists $\varepsilon(r)>0$ such that
		$$\operatorname{Iso}_x(B_{r}(x))=\operatorname{Iso}_x(B_{r+\varepsilon(r)}(x))$$
		for all $x \in B_{r_0}(x_0)$.
	\end{cor}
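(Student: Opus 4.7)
Fix a base point $x_0 \in B_{r_0}(x_0)$ and, for $R > 0$ supplied by \autoref{Lower domain bound}, study the single-variable family
$$H(r) := \operatorname{Iso}_{x_0}(B_r(x_0)), \qquad 0 < r < R.$$
By \autoref{lem: Extensions are unique}, restriction of an isometry is uniquely determined, so restriction gives an injective map $H(r') \hookrightarrow H(r)$ whenever $r' > r$. By \autoref{lem: Pointed isometries are Lie group}, each $H(r)$ is a compact Lie group, and all of them embed into $H(r_*)$ for any fixed small $r_*>0$. Thus $\{H(r)\}$ is a non-increasing family of closed subgroups of a fixed compact Lie group.

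The goal is to show that the set $J \subset (0,R)$ of ``right-jump'' radii, meaning those $r$ for which $H(r) \supsetneq H(r+\varepsilon)$ for every $\varepsilon > 0$, is at most countable (in fact finite), and then to transfer this conclusion to every $x$. I would control $J$ via two discrete invariants, in the same spirit as the stabilization argument used in \autoref{lem: Existence of local extensions I}. First, $r \mapsto \dim H(r)$ is bounded, integer-valued and non-increasing, so it has only finitely many jump points. Second, on any maximal interval $I$ on which $\dim H$ is constant, the identity component $H(r)^0$ is constant: for $r < r'$ in $I$, the inclusion $H(r')^0 \subset H(r)^0$ is an inclusion of closed connected subgroups of the same dimension, hence an equality. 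Consequently the component group $H(r)/H(r)^0$ takes values, on $I$, in a fixed finite group, and it is non-increasing in $r$; so it too has only finitely many jumps on $I$. Combining both invariants, $J$ is finite, and for every $r \in (0,R) \setminus J$ one can pick $\varepsilon(r) > 0$ with $H(r) = H(r+\varepsilon(r))$.

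For the uniformity statement, I would use the isometries $f_{x_0 x} : B_R(x_0) \to B_R(x)$ from \autoref{Lower domain bound}. Conjugation $\phi \mapsto f_{x_0 x} \circ \phi \circ f_{x_0 x}^{-1}$ maps $\operatorname{Iso}_{x_0}(B_s(x_0))$ bijectively onto $\operatorname{Iso}_{x}(B_s(x))$ for every $s \leq R$, and it obviously commutes with the restriction maps on smaller balls. Therefore the entire non-increasing chain at $x_0$ is isomorphic, as a chain of subgroups, to the chain at $x$, so $J$ and $\varepsilon(r)$ selected for $x_0$ automatically satisfy $\operatorname{Iso}_x(B_r(x)) = \operatorname{Iso}_x(B_{r+\varepsilon(r)}(x))$ for every $x \in B_{r_0}(x_0)$ and every $r \notin J$.

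The only real content is the dimension-then-components dichotomy used to bound $J$, and its key ingredient -- that closed connected subgroups of the same dimension in a compact Lie group must coincide -- is a standard rigidity fact; so I do not expect a serious obstacle beyond carefully invoking \autoref{lem: Pointed isometries are Lie group} to access the compact Lie group structure and being mindful that $r + \varepsilon(r)$ stays below the bound $R$ coming from \autoref{Lower domain bound}.
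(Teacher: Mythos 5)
Your argument reaches the right conclusion but by a genuinely different route from the paper's, and it contains one overclaim worth flagging. The paper establishes the countability of the exceptional set via Lemma~\ref{lem: Existence of local extensions I} (a left-stabilization statement: for each $r$ the chain $s\mapsto\operatorname{Iso}_{x_0}(B_s(x_0))$ is constant on some interval $(r-\delta_r,r)$, proved by noting that a decreasing \emph{sequence} of compact Lie groups stabilizes) together with a short infimum/contradiction argument on $\alpha:=\inf\{\tau: C\cap[\tau,1]\text{ countable}\}$. You instead decompose the monotone family $H(r)$ directly: the dimension function has finitely many jumps, and on each maximal interval $I$ of constant dimension the identity component $K$ is constant because a closed connected subgroup of the same dimension in a connected Lie group must equal it, leaving a non-increasing integer-valued component-order function. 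This is more transparent structurally and avoids the infimum trick; the paper's route is shorter to state but hides the same Lie-theoretic stabilization inside the lemma. The uniformity-in-$x$ step is the same conjugation argument in both.

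Two issues. First, a small direction slip: for $r<r_*$ one has $H(r_*)\subset H(r)$, so it is not true that ``all of them embed into $H(r_*)$''; there is no single compact Lie group containing the whole family $\{H(r)\}_{0<r<R}$ unless you bound $r$ away from $0$ (this does not affect the argument, since the dimension and component-group bookkeeping are purely local). Second and more substantively, the claim that on $I$ the component group ``takes values in a fixed finite group'' and hence that $J$ is \emph{finite} is not established: as $r$ decreases towards $\inf I$ the groups $H(r)$ grow, and nothing you have said bounds $|H(r)/K|$; abstractly an ascending chain of finite subgroups of a Lie group can be unbounded (e.g.\ $\mathbb{Z}/2^n\subset U(1)$), so finiteness of $J$ does not follow. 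What does follow, and is all that the corollary asserts, is countability: for any $c\in I$ the orders $|H(r)/K|$ for $r\in[c,\sup I)$ are bounded by $|H(c)/K|$, giving finitely many jumps there, and letting $c\downarrow\inf I$ along a sequence yields countably many jumps on $I$. With that adjustment your proof is sound.
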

	\begin{proof}
		Fix $x \in B_{r_0}(x)$. We show that for all but countably many $r>0$ an isometry can be extended, i.e $\operatorname{Iso}_x(B_{r}(x))=\operatorname{Iso}_x(B_{r+\varepsilon(r)}(x))$ for some $\varepsilon(r)>0$. This immediately follows from \autoref{lem: Existence of local extensions I}. Indeed without loss of generality assume $R=1$. We want to show that $$C:=\lbrace r \in (0,1): \text{ for all } \varepsilon>0 : \operatorname{Iso}_x(B_{r}(x))\neq \operatorname{Iso}_x(B_{r+\varepsilon(r)}(x)) \rbrace$$ is countable. By \autoref{lem: Existence of local extensions I} we have 
		$$ \alpha:= \inf \lbrace \tau \in [0,1] : C \cap [\tau,1] \text{ is countable } \rbrace< \infty,$$
		since the set appearing in the definition is not empty. If we can show $\alpha=0$, then the statement follows. Assume $\alpha>0$, then by \autoref{lem: Existence of local extensions I} $C \cap [\alpha-\varepsilon, 1]$ is countable for some $\varepsilon>0$. This is a contradiction to the choice of $\alpha$ and the statement follows.\par 
		The second statement is that this $r>0$ does not depend on the point, i.e.
		$$\operatorname{Iso}_x(B_{r}(x))=\operatorname{Iso}_x(B_{r+\varepsilon(r)}(x)) \Rightarrow \operatorname{Iso}_y(B_{r}(y))=\operatorname{Iso}_y(B_{r+\varepsilon(r)}(y))$$
		for all $x,y \in B_{r_0}(x_0)$.\par
		Fix an element $g \in \operatorname{Iso}_y(B_{r}(y))$ and consider a pointed isometry $f:B_{R}(x) \rightarrow B_R(y)$ provided by \autoref{Lower domain bound}. If $r\leq R$, then one has 
		$$f^{-1} \circ g \circ f \vert_{B_r(x)} \in \operatorname{Iso}_x(B_{r}(x)).$$
		By assumption there exists an isometry
		$$\overline{f^{-1} \circ g \circ f}:B_{r+\varepsilon(r)}(x) \rightarrow B_{r+\varepsilon(r)}(x) $$
		extending $f^{-1} \circ g \circ f \vert_{B_r(x)}$. Now $f \circ \overline{f^{-1} \circ g \circ f} \circ f^{-1}$ is an extension of $g$ and by the uniqueness result \autoref{lem: Extensions are unique}. The statement follows.
	\end{proof}
	\begin{lem}[Existence of local extensions]\label{lem: Existence of local extensions III}
		Denote by $B_{r_0}(x_0)$ the set coming from \autoref{Lower domain bound}, then there exists $R>0$ such that for all but countably many $r<R$ there is $\varepsilon_r>0$ such that every pointed isometry $ f: B_r(x) \rightarrow B_r(y)$ can be extended to an isometry $F:B_{r+\varepsilon_r}(x) \rightarrow B_{r+\varepsilon_r}(y)$ for every $x,y\in  B_{r_0}(x_0)$.
	\end{lem}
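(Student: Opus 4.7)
The plan is to reduce the statement about isometries between \emph{different} balls to the corresponding statement about pointed self-isometries, which has already been established in \autoref{cor: Existence of local extensions II}. Concretely, I would first fix $R>0$ small enough so that both \autoref{Lower domain bound} and \autoref{cor: Existence of local extensions II} apply: the former supplies, for all $x,y\in B_{r_0}(x_0)$, a reference pointed isometry $g_{xy}\colon B_R(x)\to B_R(y)$ with $g_{xy}(x)=y$, while the latter gives a countable exceptional set $C\subset(0,R)$ such that for every $r\in(0,R)\setminus C$ there exists $\varepsilon(r)>0$ with $\Iso_x(B_r(x))=\Iso_x(B_{r+\varepsilon(r)}(x))$ for every $x\in B_{r_0}(x_0)$. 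After possibly replacing $\varepsilon(r)$ by $\min\{\varepsilon(r),R-r\}$, we may assume $r+\varepsilon(r)\leq R$.

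Next, given any pointed isometry $f\colon B_r(x)\to B_r(y)$ with $r\in(0,R)\setminus C$, I form the pointed self-isometry
\[
h := g_{xy}^{-1}\circ f\colon B_r(x)\longrightarrow B_r(x),
\]
which lies in $\Iso_x(B_r(x))$. By the choice of $R$ and \autoref{cor: Existence of local extensions II}, $h$ admits an extension $\bar h\in\Iso_x(B_{r+\varepsilon(r)}(x))$. Since $r+\varepsilon(r)\leq R$ and $g_{xy}$ is defined and isometric on all of $B_R(x)$ with $g_{xy}(B_{r+\varepsilon(r)}(x))=B_{r+\varepsilon(r)}(y)$, the composition
\[
F := g_{xy}\circ \bar h\colon B_{r+\varepsilon_r}(x)\longrightarrow B_{r+\varepsilon_r}(y),\qquad \varepsilon_r:=\varepsilon(r),
\]
is an isometry. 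Restricting to $B_r(x)$ one computes $F=g_{xy}\circ h=g_{xy}\circ g_{xy}^{-1}\circ f=f$, so $F$ is the desired extension of $f$, and of course $F(x)=y$.

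Uniqueness of $F$ is not part of the statement, but it already follows from \autoref{lem: Extensions are unique}. The countable exceptional set is inherited directly from \autoref{cor: Existence of local extensions II}; no further loss is incurred. I do not expect a genuine obstacle in this argument: all the heavy lifting (the Baire category reduction, the Hilbert–Smith based Lie-group structure on $\Iso_x$, Newman's theorem for uniqueness, and the stabilisation of the descending chain of compact Lie groups) has already been carried out in the preceding lemmas. The only item requiring a modicum of care is bookkeeping of radii so that each composition is defined and lands in the stated ball, which is handled by the truncation $\varepsilon_r\le R-r$ above.
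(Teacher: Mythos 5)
Your proof is correct and matches the paper's own argument essentially verbatim: you conjugate $f$ by the reference isometry $g_{xy}$ from \autoref{Lower domain bound} to land in $\Iso_x(B_r(x))$, extend via \autoref{cor: Existence of local extensions II}, and compose back. The only cosmetic difference is notation ($g_{xy}$ for the paper's $G$) and the explicit truncation $\varepsilon_r\le R-r$, which the paper handles with the phrase ``without loss of generality $r+\varepsilon(r)<R$.''
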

	\begin{proof}
		Choose $r>0$ according to \autoref{cor: Existence of local extensions II} and an isometry $G:B_R(x) \rightarrow B_R(y)$ provided by \autoref{Lower domain bound}, where $x$ denotes a point in $B_{r_0}(x_0)$. We can assume without loss of generality $r+\varepsilon(r) <R$. Then $G^{-1} \circ f$ is an element of $\Iso_x(B_r(x))$ and thus has an extension
		$$\overline{G^{-1} \circ f}: B_{r+\varepsilon(r)}(x) \rightarrow  B_{r+\varepsilon(r)}(x).$$
		Now $F:= G \circ \overline{G^{-1} \circ f}$ is the desired extension of $f$.
	\end{proof}

	\subsection{Large extension}\label{large extension}
	The goal of this subsection is to prove \autoref{Prop: Extension Property} using \autoref{lem: Existence of local extensions III}. Consider an isometry $f:B_r(x) \rightarrow B_r(y)$, a point $z \in B_r(x)$ and a point $z' \notin B_r(x)$ The idea is to extend $f$ along a path $\gamma$ from $z$ to $z'$ by repeatedly applying \autoref{lem: Existence of local extensions III}. It remains to prove that this extension procedure is well-defined. This will take up most of the subsection. Once this is established, we will be able to construct a \glqq large\grqq{} extension.\par
	
	\begin{defi}[Isometry caterpillar]
		Let $(M,g_0)$ be a $C^0$-Riemannian manifold. Consider a path $\gamma:[a,b] \rightarrow M$
		parametrized proportional to arc-length by constant speed $C$ (i.e. $L(\gamma\vert_{[t_1,t_{2}]})= C(t_2-t_1)$ for all $t_1,t_2 \in [a,b]$ with $t_1 < t_2$). An $r$-isometry caterpillar along $\gamma$ is a family of isometries $f_t:B_r(\gamma(t))\rightarrow B_r(f(\gamma(t)))$ for $t \in [a,b]$, such that for all $t_1,t_2 \in [a,b]$ with $\vert t_1 -t_2 \vert < \frac{r}{10C}$ we have
		$$ f_{t_1} \vert_{B_r(\gamma(t_1)) \cap B_r(\gamma(t_2))} = f_{t_2} \vert_{B_r(\gamma(t_1)) \cap B_r(\gamma(t_2))}.  $$
		We say an isometry caterpillar is fat, if every isometry $f_t:B_r(\gamma(t))\rightarrow B_r(f(\gamma(t)))$ can be extended to an isometry $F_t:B_{10r}(\gamma(t)) \rightarrow B_{10r}(f(\gamma(t)))$.
	\end{defi}
	Observe that the above definition is invariant under linear reparameterizations.
	The next Lemma shows: If two $r$-isometry caterpillars (for the same path $\gamma$) agree at the starting point, then they agree everywhere. 
	\begin{lem}[Caterpillar uniqueness]\label{lem: Caterpillar uniqueness}
		Let $(M,g_0)$ be a $C^0$-Riemannian manifold and $\gamma:[a,b] \rightarrow M$ a path parametrized proportional to arc-length. If $f_t^1,f_t^2$ are two $r$-isometry caterpillars along $\gamma$ satisfying $f_a^1 \equiv f_a^2$ on $B_r(\gamma(a))$, then one has $f_t^1 = f_t^2$ on $B_r(\gamma(t))$ for all $t \in [a,b]$.
	\end{lem}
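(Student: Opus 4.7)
The plan is a connectedness argument on $[a,b]$. For each $t \in [a,b]$, consider the self-isometry $h_t := (f_t^2)^{-1}\circ f_t^1 : B_r(\gamma(t)) \to B_r(\gamma(t))$ and the set
\[ S := \{t \in [a,b] : h_t = \operatorname{id}_{B_r(\gamma(t))}\}. \]
By hypothesis $a \in S$, so once I show that $S$ is both open and closed in $[a,b]$, connectedness of the interval forces $S=[a,b]$ and the lemma follows.

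The engine is an \emph{identity principle}: if a self-isometry $h$ of the open ball $B_r(\gamma(t))$ is the identity on some nonempty open subset $O$, then $h = \operatorname{id}$. I would prove this by showing that the interior $F^\circ$ of the closed fixed-point set $F := \{y : h(y)=y\}$ is both open (tautologically) and closed in $B_r(\gamma(t))$; since $F^\circ \supseteq O$ is nonempty and $B_r(\gamma(t))$ is connected (as an open ball in a length space), this forces $F^\circ = B_r(\gamma(t))$. For closedness, let $y_n \in F^\circ$ converge to $y \in B_r(\gamma(t))$. Then $h(y)=y$ by continuity, and for $\delta>0$ small with $B_\delta(y) \subset B_r(\gamma(t))$ the isometry $h$ maps $B_\delta(y)$ into itself (because $d(h(z),y)=d(z,h(y))=d(z,y)<\delta$), hence $h\vert_{B_\delta(y)} \in \operatorname{Iso}_y(B_\delta(y))$. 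By \autoref{lem: Pointed isometries are Lie group} this is a compact Lie group, so $\overline{\langle h\vert_{B_\delta(y)}\rangle}$ is a compact Lie subgroup every element of which pointwise fixes the nonempty open set $B_\delta(y)\cap F^\circ$. Newman's theorem, exactly as in \autoref{lem: Extensions are unique}, then forces this closure to be trivial, so $h$ is the identity on $B_\delta(y)$ and $y \in F^\circ$.

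With the identity principle in hand, both openness and closedness of $S$ follow from the caterpillar condition, using that whenever $|t_1-t_2| < r/(10C)$ one has $d(\gamma(t_1),\gamma(t_2)) \leq C|t_1-t_2| < r/10$, so that $B_r(\gamma(t_1)) \cap B_r(\gamma(t_2))$ is a nonempty open set (it contains both $\gamma(t_1)$ and $\gamma(t_2)$). For openness, if $t_0 \in S$ and $t$ is close enough to $t_0$, the caterpillar equalities $f_t^i\vert_{B_r(\gamma(t))\cap B_r(\gamma(t_0))} = f_{t_0}^i\vert_{B_r(\gamma(t))\cap B_r(\gamma(t_0))}$ for $i=1,2$ combined with $f_{t_0}^1 = f_{t_0}^2$ give $h_t = \operatorname{id}$ on this nonempty open subset of $B_r(\gamma(t))$, and the identity principle upgrades this to $t \in S$. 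Closedness is symmetric: if $t_n \to t_0$ with $t_n \in S$, then for large $n$ the caterpillar yields $f_{t_0}^1 = f_{t_0}^2$ on the open set $B_r(\gamma(t_0)) \cap B_r(\gamma(t_n))$, and the identity principle gives $t_0 \in S$. The main obstacle is the identity principle itself: in $C^0$ regularity the fixed-point set of a single isometry has no a priori local structure, so the Lie-group-plus-Newman trick already used in \autoref{lem: Extensions are unique} is essential to get local control before propagating it by connectedness.
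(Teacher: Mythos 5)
Your argument is correct and rests on the same two ingredients as the paper's proof: the caterpillar overlap condition to transfer agreement between nearby parameters $t$, and a Newman-theorem rigidity statement to upgrade agreement on an open piece of $B_r(\gamma(t))$ to all of $B_r(\gamma(t))$. The paper runs this as a finite induction in steps along $\gamma$ and invokes \autoref{lem: Extensions are unique} directly; you recast it as an open-and-closed argument on $[a,b]$ and first establish a slightly more general ``identity principle'' (agreement on an arbitrary nonempty open set rather than a concentric ball). That extra generality is not actually needed here, and saves no work: since $d(\gamma(t),\gamma(t_0))<r/10$, the overlap $B_r(\gamma(t))\cap B_r(\gamma(t_0))$ always contains the center $\gamma(t)$ together with a small concentric ball $B_\rho(\gamma(t))$, so one can apply \autoref{lem: Extensions are unique} at $\gamma(t)$ directly and skip the internal open-closed argument on the interior of the fixed-point set. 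With that simplification your connectedness framing and the paper's step-by-step induction are just two phrasings of the same argument.
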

	\begin{proof}
		Parameterize $\gamma$ by arc-length and consider $t \in [a,b]$ such that $\vert a-t \vert \leq r/2$. Then $f_t^1$ and $f_t^2$ agree on $B_{r/2}(\gamma(t))$ by triangle inequality. Now \autoref{lem: Extensions are unique} implies $f_t^1 \equiv f_t^2$ on $B_r(\gamma(t))$. Thus the claim follows inductively by subdividing $\gamma$ into pieces of length smaller that $r/2$. 
	\end{proof}
	The upcoming Lemma proves that fat caterpillars can be concatenated, observe that this statement fails without the fatness condition.
	\begin{lem}[Concatenation of caterpillars]\label{concatenation of caterpillars}
		Let $(M,g_0)$ be a $C^0$-Riemannian manifold, $\gamma_1:[a,b] \rightarrow M, \gamma_2:[b,c] \rightarrow M$ paths parametrized by arc-length with $\gamma_1(b)=\gamma_2(b)$ and $f_t^1, f_t^2$ fat $r$-isometry caterpillars along $\gamma_1$ and $\gamma_2$ respectively satisfying $f_b^1 \equiv f_b^2$ on $B_r(\gamma_1(b))=B_r(\gamma_2(b))$.\par
		Then the family of isometries $f_t:B_r(\gamma(t))\rightarrow B_r(f(\gamma(t)))$ for $t \in [a,c]$ is a fat isometry caterpillar along the concatenation of $\gamma_1$ and $\gamma_2$.
	\end{lem}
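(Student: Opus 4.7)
The plan is to verify the two defining properties of a fat $r$-isometry caterpillar for the family $\{f_t\}_{t\in[a,c]}$ obtained by setting $f_t=f^1_t$ on $[a,b]$ and $f_t=f^2_t$ on $[b,c]$, which is well-defined at $b$ by the hypothesis $f^1_b\equiv f^2_b$. Fatness is inherited immediately from the fatness of $f^1_t$ and $f^2_t$. The concatenated path $\gamma$ is again arc-length, so the caterpillar identity to be checked is $f_{t_1}=f_{t_2}$ on $B_r(\gamma(t_1))\cap B_r(\gamma(t_2))$ for $|t_1-t_2|<r/10$. This passes trivially whenever $t_1,t_2$ lie in the same subinterval $[a,b]$ or $[b,c]$; the only substantive case is $t_1\in[a,b)$, $t_2\in(b,c]$ with $|t_1-t_2|<r/10$, in which case $\gamma(t_1)$ and $\gamma(t_2)$ each lie within distance $r/10$ of the junction point $p:=\gamma_1(b)=\gamma_2(b)$.

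For this case I would apply \autoref{lem: Extensions are unique} three times, each after recentering at $p$. Write $q:=f^1_b(p)=f^2_b(p)$. First, by fatness at $b$ the common isometry $f^1_b=f^2_b$ admits extensions $F^1_b,F^2_b\colon B_{10r}(p)\to B_{10r}(q)$, and the Uniqueness Lemma at $p$ forces $F^1_b=F^2_b$ on $B_{10r}(p)$. Second, the caterpillar identity for the first family gives $f^1_{t_1}=f^1_b$ on $B_r(\gamma(t_1))\cap B_r(p)$, a set that contains the concentric ball $B_{9r/10}(p)$ by the triangle inequality. Using fatness to extend $f^1_{t_1}$ to $F^1_{t_1}$, the restrictions of $F^1_{t_1}$ and $F^1_b$ to any ball $B_s(p)$ with $9r/10<s<10r-r/10$ are both pointed isometries at $p$ onto $B_s(q)$ that agree on $B_{9r/10}(p)$; \autoref{lem: Extensions are unique} then gives $F^1_{t_1}=F^1_b$ on $B_s(p)$ for every such $s$. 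Symmetrically, $F^2_{t_2}=F^2_b$ on a comparable ball around $p$.

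Chaining these three identities yields $F^1_{t_1}=F^2_{t_2}$ on a ball around $p$ that comfortably contains $B_r(\gamma(t_1))\cap B_r(\gamma(t_2))\subset B_{11r/10}(p)$; restricting to the original domains gives $f^1_{t_1}=f^2_{t_2}$ on that intersection, as required. The main technical obstacle will be the recentering step: $F^1_{t_1}$ is \emph{a priori} pointed at $\gamma(t_1)$, not at $p$, so one must verify that its restriction to $B_s(p)$ is a bona fide pointed isometry at $p$ onto $B_s(q)$ in order to invoke \autoref{lem: Extensions are unique} legitimately. This amounts to checking $F^1_{t_1}(p)=q$ (which follows from $p\in B_r(\gamma(t_1))\cap B_r(p)$ together with the caterpillar identity for $f^1$) plus the routine fact that an isometry of open balls sends small concentric subballs to concentric subballs of the same radius. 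Without the fatness hypothesis there would be no extension based at $\gamma(t_1)$ with enough room to reach $p$, so fatness is indispensable for this argument; this is the reason the lemma fails for ordinary (non-fat) caterpillars.
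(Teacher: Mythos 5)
Your proof is correct and follows essentially the same approach as the paper: identify the mixed case $t_1 \in [a,b)$, $t_2 \in (b,c]$, introduce the fat extensions $F^1_{t_1}$, $F^2_{t_2}$, and $F^1_b$ (you also introduce $F^2_b$ and show $F^1_b = F^2_b$, a small redundancy since the paper compares both $F^1_{t_1}$ and $F^2_{t_2}$ directly to $F^1_b$), and invoke \autoref{lem: Extensions are unique} to propagate agreement across the junction. Your version is somewhat more careful than the paper's in flagging the recentering step needed to apply \autoref{lem: Extensions are unique} — which as stated concerns pointed isometries of a common ball — but the underlying mechanism and the role of fatness are identical.
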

	\begin{proof}
		We need to check for $ b - r/10 < t_1 < b < t_2 < b +  r/10$ the condition
		$$ f_{t_1} \vert_{B_r(\gamma_1(t_1)) \cap B_r(\gamma_2(t_2))} = f_{t_2} \vert_{B_r(\gamma_1(t_1)) \cap B_r(\gamma_2(t_2))}.  $$
		Consider a point $z \in B_r(\gamma_1(t_1)) \cap B_r(\gamma_2(t_2))$ and the extensions 
		$$F_{t_i}^i : B_{10r}(\gamma_i(t_i)) \rightarrow B_{10r}(f_{t_i}^i(\gamma_i(t_i))) \text{ for } i =1,2$$
		and
		$$F_{b}^1 : B_{10r}(\gamma_1(b)) \rightarrow B_{10r}(f_{t_1}^1(\gamma_1(b)))$$ 
		provided by the fatness condition of the caterpillar. By construction $F_{t_2}^2$ and $F^1_b$ agree in a small neighborhood of $\gamma(t_2)$, therefore they have also to satisfy $F^2_{t_2}(z)=F^1_b(z)$ by \autoref{lem: Extensions are unique}, otherwise we would obtain two different extensions fixing an open neighborhood of $\gamma(t_2)$. A similar argument gives $F_{t_1}^1(z)=F_b^1(z)$. Since these maps are just extensions of $f_{t_1},f_{t_2}$, we have $f_{t_1}(z)=f_{t_2}(z)$. Hence the result.
	\end{proof}
	\begin{lem}[Existence of caterpillar isometries]\label{lem: Existence of caterpillar isometries}
		Let $(M,g_0)$ be a $C^0$-Riemannian manifold. There exists an open, simply connected set $U$ and $R>0$ such that for any $x,y \in U$, $0<r<R$, every pointed isometry $f:B_r(x)\rightarrow B_r(y)$, every $z \in B_R(x)$ and any rectifiable path $\gamma: [a,b] \rightarrow M$ from $x$ to $z$ there is a $\rho>0$ and a fat $\rho$-isometry caterpillar extending $f\vert_{B_{\rho}(x)}$.
	\end{lem}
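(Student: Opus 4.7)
The plan is to build the caterpillar incrementally along $\gamma$ by restricting a sequence of large isometry extensions, produced step-by-step by \autoref{lem: Existence of local extensions III}. Fix $U$ to be a simply connected open subset of $B_{r_0}(x_0)$ from \autoref{Lower domain bound} and let $R>0$ be small enough that the conclusions of \autoref{cor: Existence of local extensions II} and \autoref{lem: Existence of local extensions III} are available at every point of a compact neighborhood of $\overline{B_R(x)}$ for each $x\in U$; such a radius exists because local homogeneity transports the extension machinery from $x_0$ to every point of $M$, with uniform parameters on compact sets obtained from a compactness argument. Given the data $(x,y,r,f,z,\gamma)$, parameterize $\gamma:[a,b]\to M$ proportional to arc-length at constant speed $C$ and choose $\rho>0$ small enough that $10\rho\leq r$, $\rho<R/10$, and so that iterated application of \autoref{cor: Existence of local extensions II} (through the countable set of bad radii) yields the uniform property that every pointed isometry of radius $\rho$ centered at a point of a fixed compact neighborhood of $\gamma([a,b])$ extends to one of radius $10\rho$.

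Next, subdivide $[a,b]$ into intervals $a=t_0<t_1<\cdots<t_N=b$ of mesh smaller than $\rho/(100C)$, so that $\gamma(t_j)\in B_{\rho/10}(\gamma(t_{j-1}))$. Set $F_{t_0}:=f|_{B_{10\rho}(x)}$ (well defined since $10\rho\leq r$) and construct inductively large extensions
\[F_{t_j}:B_{10\rho}(\gamma(t_j))\to B_{10\rho}(F_{t_{j-1}}(\gamma(t_j)))\]
by first restricting $F_{t_{j-1}}$ to a pointed isometry of radius $\rho$ based at $\gamma(t_j)$ and then extending to radius $10\rho$ via the uniform property fixed above. For $t\in[t_{j-1},t_j)$, define $f_t:=F_{t_{j-1}}|_{B_\rho(\gamma(t))}$, producing the required family $(f_t)_{t\in[a,b]}$.

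To verify the caterpillar condition for $t_1<t_2$ with $|t_1-t_2|<\rho/(10C)$: if $t_1,t_2$ lie in a common subinterval, both $f_{t_i}$ are restrictions of the same $F_{t_{j-1}}$ and therefore agree on overlaps. If $t_1,t_2$ straddle a subdivision point $t_j$, the two large extensions $F_{t_{j-1}}$ and $F_{t_j}$ restrict to the same pointed isometry $f_{t_j}$ on $B_\rho(\gamma(t_j))$; a Newmann-type argument as in \autoref{lem: Extensions are unique}, applied to $F_{t_j}^{-1}\circ F_{t_{j-1}}$ on a connected neighborhood of the overlap of their domains, forces these two extensions to coincide on their common domain, yielding the required identity on $B_\rho(\gamma(t_1))\cap B_\rho(\gamma(t_2))$. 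Fatness of the caterpillar is built into the construction since, by the uniform extension property imposed on $\rho$, each pointed isometry $f_t$ admits a radius-$10\rho$ extension centered at $\gamma(t)$.

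The main obstacle will be the setup step of arranging a single radius $\rho$ admitting the extension from $\rho$ to $10\rho$ \emph{uniformly} along the compact image of $\gamma$, since $\gamma$ need not remain in $U$ and the extension machinery of \autoref{small isometries} was established only at points of $U$; this requires transporting the extension results by pointed isometries from $x_0$ and combining with compactness to work past the countable set of bad radii. A secondary technicality is the rigidity comparison between successive large extensions with distinct base points, which requires a small adaptation of the Newmann-type argument used in \autoref{lem: Extensions are unique}.
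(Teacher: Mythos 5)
Your overall plan is close to the paper's: both subdivide the path and march forward by repeated extension. However, there is a genuine circularity in the step you yourself flag as the main obstacle. You invoke a \emph{uniform} property that any pointed isometry of radius $\rho$ extends to radius $10\rho$, justified by ``iterated application of \autoref{cor: Existence of local extensions II}.'' But that corollary (and \autoref{lem: Existence of local extensions III}) only produce an extension by some $\varepsilon(r) > 0$ which may be arbitrarily small, and there is no control ensuring that iterating through the countably-many good radii accumulates a factor of ten. Indeed, a ``pointed isometry of radius $\rho$ extends to radius $10\rho$'' statement would already be a quantitative version of \autoref{Prop: Extension Property}, which is the result this lemma is a stepping stone toward; you cannot presuppose it here.

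The paper sidesteps this by a shrink-and-extend device you did not use: choose $r'$ so that the small extension $F : B_{r'+\varepsilon'}(x) \to B_{r'+\varepsilon'}(y)$ of $f|_{B_{r'}(x)}$ exists, then take the caterpillar radius to be $\rho = r'/10$. Fatness now means extending $f_t$ from $B_{r'/10}(\gamma(t))$ to $B_{r'}(\gamma(t))$, and for $d(x,\gamma(t)) < \varepsilon'$ this is handed to you by restricting the \emph{same} map $F$ --- no iterated extension is needed. The caterpillar then advances along $\gamma$ via \autoref{concatenation of caterpillars}, which also furnishes the rigidity argument across subdivision points that you sketch by a ``Newmann-type argument''; note that \autoref{lem: Extensions are unique} as stated compares two extensions of a pointed isometry with the \emph{same} base point and radius, so gluing $F_{t_{j-1}}$ and $F_{t_j}$ at a straddle point needs the more careful setup of the concatenation lemma rather than a direct citation. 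In short: replace the unproved $\rho \to 10\rho$ step by the choice $\rho := r'/10$ and invoke the concatenation lemma for compatibility, and the argument goes through.
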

	\begin{proof}
		In view of \autoref{lem: Existence of local extensions III} we can take $U:=B_{\frac{r_0}{10}}(x_0)$ (where $x_0, r_0$ are coming from \autoref{Lower domain bound}) and find $r',\varepsilon'>0$ such that for all $x,y \in U$ every pointed isometry $g:B_{r'}(x)\rightarrow B_{r'}(y)$ can be extended to an isometry $G:B_{r'+\varepsilon'}(x) \rightarrow B_{r'+\varepsilon'}(y)$. \par
		Set $R:=r_0/10$ and consider the isometry $f:B_r(x)\rightarrow B_r(y)$ from the assumptions of \autoref{lem: Existence of caterpillar isometries} and a rectifiable path $\gamma:[a,b]\rightarrow M$ from $x$ to $z \in B_R(x)$ parameterized by arc-length. By \autoref{lem: Existence of local extensions III} we can choose $r' \ll r$.  Subdivide $\gamma$ into paths $\gamma_1, \ldots , \gamma_N$ of length less than $\varepsilon'/10$. Inductively we can construct an isometry caterpillar along $\gamma$. For the induction start set $f_t \vert_{B_{r'/10}(\gamma(t))}:= F\vert_{B_{r'/10}(\gamma(t))}$ for $t \in [a, a+\varepsilon'/10]$, where $F$ denotes the extension $F:B_{r'+\varepsilon'}(x) \rightarrow B_{r'+\varepsilon'}(y)$ of $f:B_{r'}(x) \rightarrow B_{r'}(y)$ (this is possible since $B_{r'/10}(\gamma(t)) \subset B_{r'/10+\varepsilon'}(x)$). \par
		By definition the compatibility condition for $f_t$ is satisfied and thus it is a fat isometry caterpillar for some $\rho>0$. Now by \autoref{concatenation of caterpillars} we can extend this construction to the concatenation of $\gamma_1$ and $\gamma_2$. This way we obtain the result. 
	\end{proof}
	\begin{lem}[Close path endpoint compatibility]\label{lem:Close path endpoint compatibilty}
		Let $(M,g_0)$ be a $C^0$-Riemannian manifold and $\gamma_1,\gamma_2:[a,b]\rightarrow M$ two paths parametrized proportional to arc-length satisfying $0,9 \leq L(\gamma_1)/ L(\gamma_2) \leq 1.1$. Moreover, assume there is an $r>0$ such that $$d(\gamma_1(t),\gamma_2(t)) < r/2  \text{ for all } t \in [a,b]$$
		and $\gamma_1,\gamma_2$ admitt fat $r$-isometry caterpillars $f_t^1$ and $f_t^2$ with the property
		$$ f_a^1\vert_{B_{r}(\gamma_1(a))\cap B_r(\gamma_2(a))}= f_a^2\vert_{B_{r}(\gamma_1(a))\cap B_r(\gamma_2(a))}.  $$
		Then we have 
		$$f_b^1\vert_{B_{r}(\gamma_1(b))\cap B_r(\gamma_2(b))}= f_b^2\vert_{B_{r}(\gamma_1(b))\cap B_r(\gamma_2(b))}.  $$
	\end{lem}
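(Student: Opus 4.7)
The plan is to subdivide $[a,b]$ finely and to propagate the coincidence of the two caterpillars from $t = a$ to $t = b$ step by step, using \autoref{lem: Extensions are unique} to spread agreement from a small open set to the whole intersection at each step. Let $C_i$ denote the constant speed of $\gamma_i$; the hypothesis on the length ratio makes $C_1$ and $C_2$ comparable. I would choose a partition $a = t_0 < t_1 < \cdots < t_N = b$ with $\Delta t := t_{k+1} - t_k$ small enough that both $\Delta t < r/(10 C_i)$ and $C_i \Delta t < r/8$ hold for $i = 1, 2$.

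The main claim, proved by induction on $k$, is that $f_{t_k}^1 = f_{t_k}^2$ on $B_r(\gamma_1(t_k)) \cap B_r(\gamma_2(t_k))$. The base case $k = 0$ is the hypothesis. For the induction step, the estimates above show that $\gamma_1(t_k)$ lies at distance less than $r$ from each of $\gamma_1(t_{k+1})$, $\gamma_2(t_k)$ and $\gamma_2(t_{k+1})$, so $\gamma_1(t_k)$ is an interior point of the quadruple intersection
\[
Q := B_r(\gamma_1(t_k)) \cap B_r(\gamma_1(t_{k+1})) \cap B_r(\gamma_2(t_k)) \cap B_r(\gamma_2(t_{k+1})).
\]
On $Q$, the caterpillar compatibility condition, applied separately along $\gamma_1$ and along $\gamma_2$, gives $f_{t_{k+1}}^i = f_{t_k}^i$, while the induction hypothesis gives $f_{t_k}^1 = f_{t_k}^2$; chaining these three equalities yields $f_{t_{k+1}}^1 = f_{t_{k+1}}^2$ on $Q$.

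To upgrade agreement on $Q$ to agreement on all of $B_r(\gamma_1(t_{k+1})) \cap B_r(\gamma_2(t_{k+1}))$, I would invoke fatness. Set $p := \gamma_1(t_k)$, $q := F^1_{t_{k+1}}(p) = F^2_{t_{k+1}}(p)$, and pick $\eta > 0$ with $B_\eta(p) \subset Q$. Since $d(p, \gamma_i(t_{k+1})) < r$, one has $B_{9r}(p) \subset B_{10r}(\gamma_i(t_{k+1}))$ for $i = 1, 2$, and by the isometry property of $F^i_{t_{k+1}}$ the analogous inclusion $B_{9r}(q) \subset B_{10r}(F^i_{t_{k+1}}(\gamma_i(t_{k+1})))$ holds on the image side. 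Thus $F^1_{t_{k+1}}|_{B_{9r}(p)}$ and $F^2_{t_{k+1}}|_{B_{9r}(p)}$ are two pointed isometries $B_{9r}(p) \to B_{9r}(q)$ extending the common pointed isometry $B_\eta(p) \to B_\eta(q)$, and \autoref{lem: Extensions are unique} forces them to agree on $B_{9r}(p)$. Since $B_{9r}(p) \supset B_r(\gamma_1(t_{k+1})) \cap B_r(\gamma_2(t_{k+1}))$ by the triangle inequality, the induction step is complete, and iterating to $k = N$ yields the desired statement.

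The main technical point is balancing the choice of $\Delta t$: it must be small enough to satisfy the caterpillar compatibility condition for both paths simultaneously and to guarantee that the quadruple intersection $Q$ is genuinely open, yet still result in only finitely many steps so that the inductive chain terminates. Beyond this bookkeeping of triangle inequalities, the entire argument reduces to one application of the extension-uniqueness lemma per step, powered by the fatness of the caterpillars.
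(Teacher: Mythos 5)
Your proof is correct and takes essentially the same approach as the paper: subdivide $[a,b]$ into segments short enough that both the caterpillar compatibility condition and the needed triangle-inequality bounds hold, then propagate the agreement inductively from one partition point to the next, at each step using fatness and \autoref{lem: Extensions are unique} to upgrade agreement on a small open set to agreement on the full intersection of $r$-balls. The paper compares the three fat extensions $F_a^1, F_t^1, F_t^2$ anchored at the left endpoint of each segment rather than your pair $F^1_{t_{k+1}}, F^2_{t_{k+1}}$ on the quadruple intersection $Q$, but this is only a cosmetic difference in how the uniqueness lemma is invoked.
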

	\begin{proof}
		After reparametrization we can assume that $\gamma_1$ is parametrized by arc-length and $\gamma_2$ is parametrized by constant speed $C \in [1,1.1]$. Find a subdivision $a=t_1, \ldots, t_N=b $ of $[a,b]$ such that the length of the curves $\gamma_1\vert_{[t_i,t_{i+1}]}, \gamma_2\vert_{[t_i,t_{i+1}]}$ is $\leq \frac{r}{1.1 \cdot 10}$ for $i=1,\ldots, N-1$. We will show the stronger claim
		$$ f_t^1\vert_{B_{r}(\gamma_1(t))\cap B_r(\gamma_2(t))}= f_t^2\vert_{B_{r}(\gamma_1(t))\cap B_r(\gamma_2(t))}  \text{ for all } t \in[a,b]. $$
		For $t \in [t_1, t_2]$ consider the extensions $F_a^1,F_t^1,F_t^2$. All these isometries agree in a neighborhood of $\gamma(a)$: $F_a^1$ and $F_t^1$ by the caterpillar condition and $F_a^1,F_t^2$ by assumption together with the caterpillar condition. By the triangle inequality and the fatness condition all these extensions are defined on $B_{r}(\gamma_1(t))\cap B_r(\gamma_2(t))\neq \emptyset$ and they agree by \autoref{lem: Extensions are unique}. In particular the restrictions agree as well. Thus, the claim is shown for $\gamma_1\vert_{[t_1,t_{2}]}, \gamma_2\vert_{[t_1,t_{2}]}$. Using the same argument one gets by induction the statement for all of $\gamma_1,\gamma_2$.
	\end{proof}
	\begin{lem}[Existence of global extensions]\label{lem:Existence of global extension}
		Let $(M,g_0)$ be a $C^0$-Riemannian manifold. Then there exists an open $U \subset M$ and $R>0$ such that for all points $x,y\in U$ and all $r<R$ any isometry $ f: B_{r}(x)\to B_{r}(y)$ satisfying $f(x)=y$ can be extended to a local isometry $F: B_{R}(x)\to B_{R}(y)$.
	\end{lem}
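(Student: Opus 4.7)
\emph{Plan.} The strategy is to define $F$ pointwise by transporting $f$ along paths using the caterpillar machinery of \autoref{lem: Existence of caterpillar isometries}, and then to verify well-definedness via simple connectedness and \autoref{lem:Close path endpoint compatibilty}. Let $U$ (open and simply connected) and $R>0$ be those provided by \autoref{lem: Existence of caterpillar isometries}; I would further shrink $U$ so that it lies inside a bilipschitz coordinate chart (using \autoref{lem: Maps are bilip}). Fix $x,y \in U$, $r<R$, and a pointed isometry $f:B_r(x)\to B_r(y)$. For each $z \in B_R(x)$, choose a rectifiable path $\gamma:[a,b]\to M$ from $x$ to $z$ of length less than $R$. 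Then \autoref{lem: Existence of caterpillar isometries} supplies a $\rho>0$ and a fat $\rho$-caterpillar $f_t^\gamma$ along $\gamma$ extending $f|_{B_\rho(x)}$, and I set $F(z):=f_b^\gamma(z)$. Since each $f_t^\gamma$ is an isometry, the image path $t\mapsto f_t^\gamma(\gamma(t))$ is continuous, starts at $y$, ends at $F(z)$, and has length $L(\gamma)<R$, so $F(z)\in B_R(y)$.

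The central step is showing $F(z)$ is independent of $\gamma$. Given two rectifiable paths $\gamma_0,\gamma_1:[a,b]\to U$ from $x$ to $z$, simple connectedness of $U$ provides a relative-endpoint homotopy $H:[0,1]\times[a,b]\to U$. Working in the chart, I would approximate $H$ by a piecewise-linear homotopy with rectifiable slices $\eta_s:=H(s,\cdot)$, and then choose a fine partition $0=s_0<\cdots<s_N=1$ such that consecutive slices $\eta_{s_i},\eta_{s_{i+1}}$ satisfy the hypotheses of \autoref{lem:Close path endpoint compatibilty}: pointwise distance less than $\rho/2$ and length ratio inside $[0.9,1.1]$. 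All slice caterpillars extend the same $f|_{B_\rho(x)}$ at the starting point, so \autoref{lem:Close path endpoint compatibilty} applied inductively along this chain gives $f_b^{\gamma_0}(z)=f_b^{\gamma_1}(z)$. Once well-definedness is secured, the local isometry property of $F$ follows immediately: for any $z \in B_R(x)$, \autoref{lem: Caterpillar uniqueness} shows that $F$ coincides on the small ball $B_\rho(z)$ with the isometry $f_b^{\gamma'}$ attached to a path $\gamma'$ from $x$ to $z$ chosen through $z$, so $F$ is locally distance-preserving.

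The main obstacle is turning the abstract continuous homotopy into a chain of rectifiable approximants that satisfy the specific numerical hypotheses (pointwise closeness together with comparable lengths) of \autoref{lem:Close path endpoint compatibilty}. The key tool is the bilipschitz chart from \autoref{lem: Maps are bilip}, which lets me do all geometric estimates on polygonal paths in $\mathbb{R}^n$ and then transfer them back to the intrinsic metric with controlled constants. One small subtlety is that the original $\gamma_0,\gamma_1$ need not be piecewise linear, so I would first compare each to a chart-polygonal approximation by a single application of \autoref{lem:Close path endpoint compatibilty}, and then run the homotopy argument between the polygonal approximants where linear interpolation behaves well.
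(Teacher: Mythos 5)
Your proposal follows the paper's proof closely: both define $F(z)$ by transporting $f$ along a rectifiable path via a fat isometry caterpillar, both establish well-definedness by passing from simple connectedness to a finite chain of nearby slices to which \autoref{lem:Close path endpoint compatibilty} applies, and both obtain the local isometry property by observing that $F$ agrees near $z$ with the final caterpillar isometry along a path through $z$. You merely make explicit (via bilipschitz charts and polygonal approximation) the step that the paper compresses into ``Since the argument is local, by \autoref{lem: Maps are bilip} we know that there is a homotopy $\gamma_t$ \ldots\ such that $L(\gamma_t)$ depend continuously on $t$,'' which is a helpful clarification rather than a departure.
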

	\begin{proof}
		Construction of $F$:~\\  
		Denote by $U$ the open set and by $R$ the bound coming from \autoref{lem: Existence of caterpillar isometries}. Fix the pointed isometry $f:B_r(x) \rightarrow B_r(y)$, a point $z \in B_R(x)$ and a rectifiable path $\gamma:[a,b]\rightarrow M$ from $x$ to $z$ (for example one randomly chosen shortest path from $x$ to $z$). Then by \autoref{lem: Existence of caterpillar isometries} and \autoref{lem: Caterpillar uniqueness} there exists a unique fat isometry caterpillar $f_t$, which coincides with $f$ in a neighborhood of the point $x$. Define $F(z):= f_b(\gamma(b))$.\par 
		It remains to show that $F$ is well-defined, i.e. the definition does not depend on the path $\gamma$. Let $\gamma'$ be another rectifiable path from $x$ to $z$. Since the argument is local, by \autoref{lem: Maps are bilip} we know that there is a homotopy $\gamma_t$ between $\gamma$ and $\gamma'$ keeping the endpoints fixed such that $L(\gamma_t)$ depend continuously on the parameter $t$. We can find $0=t_1,\ldots , t_N = 1$ such that $\gamma_{t_i}$ and $\gamma_{t_{i+1}}$ satisfy the assumption of \autoref{lem:Close path endpoint compatibilty}. This proves that $F(z)$ does not depend on the path $\gamma$.\par
		It remains to show that $F$ is a local isometry. Indeed consider a point $z \in B_R(x)$ and a fat isometry caterpillar $f_t$ along some rectifiable path $\gamma:[a,b]\rightarrow M$ from $x$ to $z$. Then $F$ coincides with the isometry $f_b :B_{r'}(\gamma(b)) \rightarrow B_{r'}(f_b(\gamma(b)))$ on $B_{r'}(\gamma(b))$. To see this, consider a point $v \in B_{r'}(z)$, we have shown above that the point $F(v)$ does not depend on the path. Therefore consider the path $\gamma$ followed by a shortest path from $z$ to $v$, then $f_t$ defines a fat isometry caterpillar along that path. It follows that $f_b(v)=F(v)$.
	\end{proof}
	
	In order to prove \autoref{Prop: Extension Property}, it remains to show that the extension $F$ constructed in \autoref{lem:Existence of global extension} is an isometry.

	\begin{proof}[Proof of \autoref{Prop: Extension Property}] With the notation coming from \autoref{lem:Existence of global extension} we will show that for $R':=R/100$ the map $F\vert_{\overline{B}_{R'}(x)}$ is an isometry. This is clearly sufficient in order to prove \autoref{Prop: Extension Property}.\par
		Observe that since $F$ is a local isometry, it has the homotopy lifting property. The reason we consider $R'$ instead of $R$ is to make sure that the lifts of paths stay in $B_R(x)$ and thus are well defined. The homotopy lifting property implies that $F$ is injective. Indeed consider two points $p,q \in \overline{B}_{R'}(x)$ satisfying $F(p)=F(q)$, then consider the shortest paths $\gamma_1$ from $x$ to $p$ and $\gamma_2$ from $x$ to $q$. The paths $F(\gamma_1)$ and $F(\gamma_2)$ are homotopic, lifting this homotopy gives $p=q$.\par
		Now using injectivity obtained above, we show that $F\vert_{\overline{B}_{R'}(x)}$ is distance-preserving. Consider two distinct points $v,w \in F(\overline{B}_{R'}(x))$ and $\gamma$ an arbitrary shortest path between them. Its lift $\tilde{\gamma}$ has the same length as $\gamma$, since $F$ is a local isometry. Observe that the map $F$ is $1$-Lipschitz, hence $\tilde{\gamma}$ must be a shortest path. Therefore $F$ is distance preserving.\par
		It remains to show that $F\vert_{\overline{B}_{R'}(x)}$ is surjective. Assume this is not the case, then there is a $z\in \overline{B}_{R'}(y)$ not in the image of $F\vert_{\overline{B}_{R'}(x)}$. Set $\rho:=d(y,z)$ and consider the isometry $f_{yx}:\overline{B}_{\rho}(y) \rightarrow \overline{B}_{\rho}(x)$ existing by \autoref{Lower domain bound}. Then $F\vert_{\overline{B}_{R'}(x)} \circ f_{yx}:\overline{B}_{\rho}(y) \rightarrow\overline{B}_{\rho}(y) $ is a distance preserving map, mapping a compact subset to a proper subset of itself. This is a contradiction. Therefore $F\vert_{\overline{B}_{R'}(x)}$ is a bijective distance-preserving map and thus an isometry.
	\end{proof}
	
	In order to prove \autoref{Prop: Extension Property}, it remains to show that the extension $F$ constructed in \autoref{lem:Existence of global extension} is an isometry.

	\begin{proof}[Proof of \autoref{Prop: Extension Property}] With the notation coming from \autoref{lem:Existence of global extension} we will show that for $R':=R/100$ the map $F\vert_{\overline{B}_{R'}(x)}$ is an isometry. This is clearly sufficient in order to prove \autoref{Prop: Extension Property}.\par
		Observe that since $F$ is a local isometry, it has the homotopy lifting property. The reason we consider $R'$ instead of $R$ is to make sure that the lifts of paths stay in $B_R(x)$ and thus are well defined. The homotopy lifting property implies that $F$ is injective. Indeed consider two points $p,q \in \overline{B}_{R'}(x)$ satisfying $F(p)=F(q)$, then consider the shortest paths $\gamma_1$ from $x$ to $p$ and $\gamma_2$ from $x$ to $q$. The paths $F(\gamma_1)$ and $F(\gamma_2)$ are homotopic, lifting this homotopy gives $p=q$.\par
		Now using injectivity obtained above, we show that $F\vert_{\overline{B}_{R'}(x)}$ is distance-preserving. Consider two distinct points $v,w \in F(\overline{B}_{R'}(x))$ and $\gamma$ an arbitrary shortest path between them. Its lift $\tilde{\gamma}$ has the same length as $\gamma$, since $F$ is a local isometry. Observe that the map $F$ is $1$-Lipschitz, hence $\tilde{\gamma}$ must be a shortest path. Therefore $F$ is distance preserving.\par
		It remains to show that $F\vert_{\overline{B}_{R'}(x)}$ is surjective. Assume this is not the case, then there is a $z\in \overline{B}_{R'}(y)$ not in the image of $F\vert_{\overline{B}_{R'}(x)}$. Set $\rho:=d(y,z)$ and consider the isometry $f_{yx}:\overline{B}_{\rho}(y) \rightarrow \overline{B}_{\rho}(x)$ existing by \autoref{Lower domain bound}. Then $F\vert_{\overline{B}_{R'}(x)} \circ f_{yx}:\overline{B}_{\rho}(y) \rightarrow\overline{B}_{\rho}(y) $ is a distance preserving map, mapping a compact subset to a proper subset of itself. This is a contradiction. Therefore $F\vert_{\overline{B}_{R'}(x)}$ is a bijective distance-preserving map and thus an isometry.
	\end{proof}
	\section{Local Lie group structure}\label{subseq: local lie group structure}
	The goal of this section is to show, that an open subset of $(M,d_{g_0})$ is isometric to a smooth locally homogeneous space.\par
	In \autoref{subseq: extracting smooth structure} we define, using the local isometries and \autoref{Prop: Extension Property}, a local group $G$ acting transitively on an open subset of $O \subset M$ and prove that $G$ is locally isomorphic to a Lie group. This makes it possible to write $O$ as a local quotient of this Lie group by a local isotropy group.
	\par
	In \autoref{subseq: using smooth structure} we find
	left invariant Riemannian metric on this quotient which  turns the canonical homeomorphism  into an isometry.

	\subsection{Extracting the smooth structure}\label{subseq: extracting smooth structure}
	We start by defining the local group. \par Let us collect the data for the local group in the sense of \autoref{defi: local group}.
	Starting with our locally homogeneous $C^0$-Riemannnian manifold $M$ fix $r_0>0$ such that $B_{100 r_0}(x_0)$ is the ball coming from \autoref{Prop: Extension Property}. Denote by $F_1$ and $F_2$ the extensions of isometries $f_1,f_2:B_{r_0/10}(x_0) \rightarrow B_{r_0}(x_0)$, which exist due to \autoref{Prop: Extension Property}.
	\begin{enumerate}
		\item Endow the collection of maps $$ G:= \left \lbrace f:B_{\frac{r_0}{10}}(x_0) \rightarrow B_{r_0}(x_0) : f \text{ isometric, }  f(B_{\frac{r_0}{10}}(x_0)) \cap B_{\frac{r_0}{10}}(x_0) \neq \emptyset \right \rbrace.$$ with the compact open topology.
		\item On $\Omega:= \left \lbrace(f_2,f_1) \in G \times G :F_2 \circ f_1 \in G \right \rbrace$ define the partial multiplication by $m(f_2,f_1) := F_2 \circ f_1$. 
		\item The neutral element $e$ of $G$ is the identity map.
		\item For every $f \in G$ define the inversion operation: Given $f \in G$ there exists a point $ y \in f(B_{\frac{r_0}{10}}(x_0)) \cap B_{\frac{r_0}{10}}(x_0)$ and $r>0$ such that 
		$$B_r(y) \subset f(B_{\frac{r_0}{10}}(x_0)) \cap B_{\frac{r_0}{10}}(x_0).$$
		Consider the restriction of $f$ to $B_r(f^{-1}(y))$. Since $f$ is an isometry, its inverse map $f^{-1}: B_r(y) \rightarrow B_r{(f^{-1}(y))}$ is also an isometry and thus has an extension $F^{-1}:B_R(y) \rightarrow B_R(f{^{-1}}(y))$. Now define $i(f)$ to be the restriction  $F^{-1} \vert_{B_{\frac{r_0}{10}}(x_0)}$.
	\end{enumerate}
	To unburden the notation we will write $G$ and mean $G$ together with the partial group structure as specified above. With a slight abuse of notation, we will call $G$ the local isometries of $M$. \par 
	Now it is straightforward to verify that $G$ as defined above is indeed a local topological group, which is locally compact.
	\begin{prop}[Local isometries form a local group]\label{Definition of local group}
		Let $M$ be a locally homogeneous $C^0$-Riemannian manifold and denote by $G$ the local isometries, as defined above. Then $G$ is a locally compact local topological group and the canonical action $G \times B_{\frac{r_0}{10}}(x_0) \mapsto M ; (g,p) \mapsto g(p)$ is continuous.\par
	\end{prop}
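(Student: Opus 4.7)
The plan is to verify the four axioms of \autoref{defi: local group}, establish continuity of the multiplication and inversion, prove local compactness via Arzelà--Ascoli, and finally deduce continuity of the action. Uniqueness of extensions (\autoref{lem: Extensions are unique}) together with the Extension Property (\autoref{Prop: Extension Property}) is the principal technical tool throughout.

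First I would address the algebraic axioms. The identity $e$ is the restriction to $B_{r_0/10}(x_0)$ of the identity on $M$; its extension via \autoref{Prop: Extension Property} is again the identity, so $m(f,e)=f=m(e,f)$, which gives axiom (3) and the inclusion $G\times\{e\}\cup\{e\}\times G\subset \Omega$. Openness of $\Omega$ is immediate: both defining conditions ``$F_2\circ f_1$ lands in $B_{r_0}(x_0)$'' and ``its image meets $B_{r_0/10}(x_0)$'' are open in the compact--open topology because the underlying balls are open. Axiom (4) is built into the construction of $i$. Associativity (2) is the only nontrivial algebraic point: whenever both triple products are defined, the maps $m(m(f_3,f_2),f_1)$ and $m(f_3,m(f_2,f_1))$ are two isometric extensions of the common composition $f_3\circ f_2\circ f_1$ defined on a small ball around $x_0$, and \autoref{lem: Extensions are unique} forces them to coincide.

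The continuity of multiplication and inversion reduces, after a short chase, to continuity of the extension map $\mathrm{ext}\colon f\mapsto F$ from $G$ to the space of isometries on the large ball. Suppose $f_n\to f$ in $G$. The extensions $F_n$ are $1$-Lipschitz with relatively compact image (by \autoref{lem: Maps are bilip} the ambient balls have compact closure in charts), so Arzelà--Ascoli supplies a subsequence $F_{n_k}$ converging uniformly to some isometry $F_\infty$. Since $F_{n_k}$ extends $f_{n_k}$ and $f_{n_k}\to f$, the limit $F_\infty$ extends $f$, so \autoref{lem: Extensions are unique} forces $F_\infty=F$; a standard subsequence argument upgrades this to $F_n\to F$. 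Continuity of $m(f_2,f_1)=F_2\circ f_1$ then follows from equicontinuity of isometries, and continuity of inversion is treated by the same scheme applied to the locally defined inverse.

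For local compactness I would invoke Arzelà--Ascoli once more: the ambient set of $1$-Lipschitz maps $B_{r_0/10}(x_0)\to \overline{B_{r_0}(x_0)}$ is compact in the compact--open topology, and $G$ sits inside it cut out by the two open conditions; compact neighborhoods of $e$ are then obtained by slightly shrinking those conditions to closed subsets. Finally, continuity of the action is elementary: for $(g_n,p_n)\to (g,p)$ the $1$-Lipschitz property gives
\[
d(g_n(p_n), g(p)) \le d(g_n(p_n),g_n(p))+d(g_n(p),g(p))=d(p_n,p)+d(g_n(p),g(p)),
\]
and both summands tend to zero, the second because uniform convergence on compact sets implies pointwise convergence. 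The step I expect to be the main obstacle is the continuity of $\mathrm{ext}$, together with the bookkeeping needed to ensure that the domains and codomains of the extensions match coherently as the basepoint $f(x_0)$ varies over $B_{r_0}(x_0)$; this is precisely where uniqueness of extensions is indispensable, since it converts the sequential compactness furnished by Arzelà--Ascoli into genuine continuity.
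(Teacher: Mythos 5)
Your proof is correct and follows essentially the same approach as the paper, whose own proof consists of the two-line remark that the axioms follow from the construction and that local compactness is an Arzelà--Ascoli argument. You supply the details the paper leaves implicit: associativity via \autoref{lem: Extensions are unique}, continuity of the extension map $f\mapsto F$ via Arzelà--Ascoli combined with uniqueness, and the $1$-Lipschitz estimate for continuity of the action.
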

	\begin{proof}
		The axioms (1)-(4) in \autoref{defi: local group} follow immediately from our construction. 
		Local compactness
		follows from an Arzela-Ascoli type argument.
	\end{proof}		
	We want to show that the local group $G$ of local isometries is locally isomorphic to a Lie group. In order to do this, we want to apply van den Dries-Goldbring globalization \cite{MR2743102} together with the Gleason-Yamabe theorem. An important observation is that small subgroups that could appear in $G$ are actually Lie groups. This fact is encoded in the next Lemma.
	\begin{lem}\label{lem: Small groups are Lie}
		Let $G$ be the local group defined in \autoref{Definition of local group}. Then there exists a neighborhood of the identity $U$, such that every locally compact subgroup $H$ satisfying $H \subset U$ is a Lie group.
	\end{lem}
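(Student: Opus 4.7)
The plan is to exhibit $H$ as a locally compact group acting effectively by isometries (hence Lipschitz homeomorphisms) on an open subset of the $C^1$-manifold $M$, so that \autoref{Thm: Hilbert smith} forces $H$ to be a Lie group. The difficulty is that elements of the local group $G$ are only partially defined maps, so $H$ itself does not a priori act on any fixed set; I will cure this by using the extensions supplied by \autoref{Prop: Extension Property}.

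First I would shrink $U$ to a symmetric neighborhood of $e$ so that each $h\in U$ satisfies $d(x_0,h(x_0))<R/10$ and admits the unique extension $\tilde h:B_R(x_0)\to B_R(h(x_0))$ provided by \autoref{Prop: Extension Property} and \autoref{lem: Extensions are unique}, where $R$ is the extension radius. For any locally compact subgroup $H\subset U$, I would then set
\[
W:=\bigcup_{h\in H}B_R(h(x_0))\subset M,
\]
an open neighborhood of $x_0$, and define an action $H\times W\to W$ by
\[
k\cdot x:=\widetilde{kh}\bigl(\tilde h^{-1}(x)\bigr)\qquad\text{for any }h\in H\text{ with }x\in B_R(h(x_0)).
\]

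The critical step is verifying that this formula is independent of the choice of $h$. If $x\in B_R(h_0(x_0))\cap B_R(h_1(x_0))$, uniqueness of extensions (\autoref{lem: Extensions are unique}) forces $\tilde h_1^{-1}\circ\tilde h_0=\widetilde{h_1^{-1}h_0}$ on a neighborhood of $\tilde h_0^{-1}(x)$, and by the same token $\widetilde{kh_0}=\widetilde{kh_1}\circ\widetilde{h_1^{-1}h_0}$ on that neighborhood; substituting shows the two candidate values coincide. Continuity of the action follows from continuity of $h\mapsto\tilde h$ in the compact-open topology (using \autoref{Definition of local group}), and each map $x\mapsto k\cdot x$ is an isometry of $W$, hence in particular a Lipschitz homeomorphism. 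That $W$ is $H$-invariant is immediate from the identity $\widetilde{kh}(B_R(x_0))=B_R(kh(x_0))$ and $kH=H$.

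Finally, effectiveness is direct: if $k\in H$ acts trivially on $W$, then taking $h=e$ in the defining formula yields $\tilde k(x)=x$ for all $x\in B_R(x_0)$, and since $k$ is the restriction of $\tilde k$ to $B_{r_0/10}(x_0)$, it follows that $k=e$ in $G$. Applying \autoref{Thm: Hilbert smith} to the effective action of the locally compact group $H$ by Lipschitz homeomorphisms on the open subset $W$ of the $C^1$-manifold $M$ concludes that $H$ is a Lie group. The main obstacle I anticipate is the bookkeeping required to make the action well-defined on overlaps, hinging on the cocycle identity for extended isometries; however, once $U$ is chosen small enough that all products $kh$, $h_1^{-1}h_0$, etc., remain in the range where \autoref{Prop: Extension Property} applies, this reduces to repeated application of \autoref{lem: Extensions are unique}.
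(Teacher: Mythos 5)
Your proposal is correct and follows the same overall strategy as the paper: construct an $H$-invariant open set on which $H$ acts effectively by Lipschitz homeomorphisms, and then invoke \autoref{Thm: Hilbert smith}. However, the paper's execution is noticeably leaner. The paper takes $U$ to consist of those $f\in G$ with $d(x_0,f(x_0))<r_0/100$ and sets $W:=\bigcup_{h\in H}h\bigl(B_{r_0/100}(x_0)\bigr)$. By the triangle inequality $W\subset B_{r_0/50}(x_0)\subset B_{r_0/10}(x_0)$, and since every element of $G$ is already defined on all of $B_{r_0/10}(x_0)$ as a genuine isometric embedding, the map $f\mapsto f\vert_W$ directly yields an action of $H$ on $W$; $H$-invariance of $W$ and effectiveness of the action are then immediate. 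In contrast, you build the invariant set out of the target balls $B_R(h(x_0))$ of the extensions and define the action by the composite $k\cdot x=\widetilde{kh}\bigl(\tilde h^{-1}(x)\bigr)$, which forces you to verify well-definedness across overlaps via cocycle identities for the extensions $\tilde h$. That machinery works — each step is a legitimate application of \autoref{lem: Extensions are unique} once $U$ is shrunk so that all relevant products land where \autoref{Prop: Extension Property} applies — but it is unnecessary here: the domain $B_{r_0/10}(x_0)$ of the local-group elements is already an order of magnitude larger than the invariant set one needs, so one never has to pass to extensions or chase compatibility conditions. The simplification you are missing is to keep the invariant set \emph{inside} the common domain of the group elements rather than enlarging it.
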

	\begin{proof}
		Denote by $U \subset G$ the local isometries satisfying $\vert x_0 f(x_0) \vert < r_0/100$. Then for any locally compact subgroup $H \subset U$ one has
		$$ \sup_{f \in H} \vert x_0 f(x_0) \vert < \frac{r_0}{100}. $$
		This implies $H(B_{\frac{r_0}{100}}(x_0)) \subset B_{\frac{r_0}{50}}(x_0)$, meaning that for every $f \in H$ the restriction of $f$ to $\cup_{h \in H}  h(B_{\frac{r_0}{100}}(x_0))$ is defined. \par
		Observe that $\cup_{h \in H}  h(B_{\frac{r_0}{100}}(x_0))$ is an open set, which is invariant under all elements of $H$ since $H$ is a group. Moreover, $H$ is a group acting effectively via Lipschitz homeomorphisms on an open set of $\mathbb{R}^n$. So by the Hilbert-Smith theorem (\autoref{Thm: Hilbert smith}) it is a Lie group.
	\end{proof}
	\begin{prop}\label{prop:local Lie}
		The local group defined in \autoref{Definition of local group} is locally isomorphic to a Lie group.
	\end{prop}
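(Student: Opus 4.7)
The strategy, foreshadowed by the remark preceding the statement, is to combine the van den Dries--Goldbring globalization theorem \cite{MR2743102} with the Gleason--Yamabe structure theorem for locally compact groups, and then to exploit \autoref{lem: Small groups are Lie} to rule out the small compact normal subgroups which could otherwise obstruct the Lie conclusion.

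Concretely, the plan proceeds in three steps. First, by \autoref{Definition of local group}, $G$ is a locally compact Hausdorff local topological group. The van den Dries--Goldbring theorem then says that some restriction $G\vert_{U'}$ of $G$ to a symmetric open neighborhood of the identity is locally isomorphic, as a local topological group, to a globally defined locally compact Hausdorff topological group $\tilde G$. I would shrink $U'$ from the outset so that it lies inside the neighborhood $U$ supplied by \autoref{lem: Small groups are Lie}, and so that the local isomorphism identifies it homeomorphically with a neighborhood of the identity of $\tilde G$. Second, I would apply Gleason--Yamabe to $\tilde G$: there exists an open subgroup $\tilde G_0 \subseteq \tilde G$ such that for every neighborhood $V$ of the identity in $\tilde G_0$ one can find a compact normal subgroup $K \subseteq V$ with $\tilde G_0/K$ a Lie group. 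Choosing $V$ small enough that it corresponds under the local isomorphism to a neighborhood of the identity in $G$ contained in $U$, the subgroup $K$ is identified with a locally compact subgroup of $G$ lying inside $U$, so by \autoref{lem: Small groups are Lie} it is itself a compact Lie group. Third, since the base $\tilde G_0/K$ of the principal $K$-bundle $\tilde G_0 \to \tilde G_0/K$ is a Lie group and the structure group $K$ is a Lie group, classical Lie theory (local sections, transport of smooth charts) upgrades $\tilde G_0$ to a Lie group. In particular $G$ is locally isomorphic to the Lie group $\tilde G_0$, which is the desired conclusion.

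The main subtlety, I expect, lies in the second step: one has to check carefully that the compact normal subgroup $K \subseteq \tilde G_0$ produced by Gleason--Yamabe really transports back through the local isomorphism to a locally compact subgroup of $G$ in the sense required by \autoref{lem: Small groups are Lie}. This amounts to chasing neighborhoods through a commutative diagram: the local isomorphism is a homeomorphism near the identity that respects the partial multiplications, so the preimage of a sufficiently small neighborhood of the identity in $\tilde G$ is contained in $U$, and the subgroup property is preserved because the two partial products agree on their common domain. The remaining ingredients---Gleason--Yamabe itself and the standard Lie-theoretic extension argument---are classical and require no local-group modifications.
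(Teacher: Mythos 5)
Your proposal is correct and follows essentially the same route as the paper: van den Dries--Goldbring globalization, Gleason--Yamabe applied with a neighborhood small enough to land inside the $U$ from \autoref{lem: Small groups are Lie}, so that the compact normal subgroup $K$ transports back to a subgroup of $G$ and is therefore Lie, and finally the classical fact that a locally compact group with a compact normal Lie subgroup and Lie quotient is itself Lie (the paper cites \cite{MR29910}[Theorem 1] for this last step, which you sketch via local sections).
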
 
	\begin{proof}
		By \autoref{Definition of local group} the local group $G$ is locally compact, thus applying van den Dries-Goldbring globalization theorem \cite{MR2743102} produces a restriction $G \vert_U$ of $G$ and a topological group $\hat{G}$ such that $G \vert_U$ is a restriction of $\hat{G}$ (compare \autoref{def restriction of group}).\par
		Applying the Gleason-Yamabe theorem \cite{MR3237440}[Theorem 1.1.17] to an open neighborhood $U$ as in \autoref{lem: Small groups are Lie} yields an open subgroup $G'$ of $\hat{G}$ and a compact normal subgroup $K$ of $G'$ such that $G'/K$ is isomorphic to a Lie group. By \autoref{lem: Small groups are Lie} $K$ is a Lie group and therefore $G'$ is a Lie group as well by \cite{MR29910}[Theorem 1]. The claim now follows.
	\end{proof}
	By the above proposition we can assume that a local Lie group, which we denote by $U_G$, is acting on a subset of $M$ by local isometries. Consider the isotropy group $H$ of $G$, by the above some restriction $U_H$ of it is a local Lie group. One can define a local quotient $U_G/U_H$, such that $U_G/U_H$ is a smooth manifold, $U_G$ operates transitively on $U_G/U_H$ and it is homeomorphic to an open subset of $M$. Local factor spaces of this type have appeared in \cites{spiro1993remark,mostow1950extensibility} and a rigorous definition has been written by Pediconi \cite{Pediconi_2020}[Proposition 6.1].

	\subsection{Capitalizing the smooth structure}\label{subseq: using smooth structure}
	In this subsection we will use the smooth structure on $U_G/U_H$ obtained in the previous \autoref{subseq: extracting smooth structure}.
	Let us denote the canonical homeomorphism by
	$h:O  \rightarrow U_G/U_H $ where  $O \subset M$ and $U_G$ acts transitively on $O$.\par
	First  we find some left invariant Riemannian metric on the local quotient $M'=U_G/U_H $.
	In a second step we show: If we consider $M'$ together with this left invariant metric, then the canonical homeomorphism $h:O \rightarrow M'$ is a Lipshitz map (this statement is a modification of \cite{MR997464}[Lemma 1]).
	
	Then applying the Rademacher theorem we find a point $p$ such that $dh_p:T_pO \rightarrow T_{h(p)}M'$ is an isomorphism of tangent spaces. Using this isomorphism we can pushforward the continuous Riemannian metric of $M$ at a point and extend it to a left-invariant (and thus smooth) Riemannian metric on $M'$. Finally it will be shown, that this metric space is isometric to $(O,d_{g_0})$, which proves the main theorem.

	Our first intermediate goal is to construct a $U_G$-invariant Riemannian metric on $U_G/U_H$.
	If $G$ is a global Lie group and $H$ is a closed subgroup using an averaging procedure such a metric is known to exist if $G$ acts effectively on $G/H$ and the closure of $Ad_H$ is compact \cite{MR2394158}[Proposition 3.16].\par
	Returning to our setting there exists a global, connected, simply connected Lie group $\hat{G}$, which is locally isomorphic to $U_G$. Denote by $\mathfrak{g},\mathfrak{h}$ the Lie algebras of $U_G,U_H$ respectively and set $\hat{H}:= \langle\exp(\mathfrak{h}) \rangle$. In order to reproduce the averaging procedure mentioned above, it is sufficient to show the compactness of $Ad_{\hat{H}}$, this is carried out in the next Lemma.

	\begin{lem}
		\label{lem:reductive}
		Let $\hat{G},\hat{H}$ be as described above and denote by $\mathfrak{g},\mathfrak{h}$ their Lie algebras. Then $Ad_{\hat{H}}$ is compact. 
	\end{lem}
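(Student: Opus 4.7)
The plan is to produce a positive definite inner product on $\mathfrak{g}$ with respect to which every operator $\mathrm{ad}(X)$, $X \in \mathfrak{h}$, is skew-symmetric. Once such an inner product exists, each $\mathrm{Ad}(\exp X) = \exp(\mathrm{ad} X)$ with $X \in \mathfrak{h}$ lies in the compact orthogonal group $O(\mathfrak{g},\langle\cdot,\cdot\rangle)$, and since $\hat{H} = \langle \exp(\mathfrak{h}) \rangle$ is generated by such elements, $\mathrm{Ad}(\hat{H})$ is contained in $O(\mathfrak{g},\langle\cdot,\cdot\rangle)$. Being a connected Lie subgroup of this compact group with Lie algebra $\mathrm{ad}(\mathfrak{h})$, it follows that $\mathrm{Ad}(\hat{H})$ is compact.

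The inner product will come from averaging over a genuine compact Lie group whose Lie algebra is $\mathfrak{h}$. By \autoref{lem: Pointed isometries are Lie group}, for sufficiently small $r>0$ the pointed isometry group $\mathrm{Iso}_{x_0}(B_r(x_0))$ is a compact Lie group; let $K$ denote its identity component. The local group $U_H$ is, by construction, a restriction of $K$ near the identity, so $\mathrm{Lie}(K) = \mathfrak{h}$. The key step is to upgrade the local conjugation action of $U_H$ on $U_G$ to a global, continuous linear action $\rho \colon K \to GL(\mathfrak{g})$. Since each $k \in K$ is an actual isometry of $B_r(x_0)$ fixing $x_0$, the assignment $g \mapsto k \circ g \circ k^{-1}$ sends a local isometry defined on a small enough ball around $x_0$ to another such local isometry, and descends to a well-defined continuous automorphism of the germ of $U_G$ at the identity. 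Passing to the tangent space gives the desired continuous homomorphism $\rho \colon K \to GL(\mathfrak{g})$; its derivative at the identity is, by construction, $\mathrm{ad}_{\mathfrak{g}}|_{\mathfrak{h}}$.

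Once $\rho$ is in hand, the conclusion is routine: pick any inner product on $\mathfrak{g}$ and average it over $K$ with respect to Haar measure. The resulting inner product $\langle\cdot,\cdot\rangle$ is $\rho(K)$-invariant; differentiating invariance at the identity shows that $\mathrm{ad}(X) \in \mathfrak{so}(\mathfrak{g},\langle\cdot,\cdot\rangle)$ for every $X \in \mathfrak{h}$. Exponentiating and using that $\hat H$ is connected and generated by $\exp(\mathfrak{h})$ yields $\mathrm{Ad}(\hat{H}) \subset O(\mathfrak{g},\langle\cdot,\cdot\rangle)$, and hence the compactness claim.

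The main obstacle is the second step, namely the passage from the local conjugation action of $U_H$ on $U_G$ to a globally defined action of the full compact Lie group $K$ on the Lie algebra $\mathfrak{g}$. Conjugation is only partially defined in the local group $U_G$, while $K$ is a genuine global group; one must verify that, because each $k \in K$ is a global isometry of the fixed ball $B_r(x_0)$, conjugation by $k$ maps arbitrarily small germs of local isometries to germs of comparable size, so the induced map on $\mathfrak{g}$ is continuous in $k$ and well-defined on the whole group $K$ rather than only on a neighborhood of the identity. All remaining arguments are standard consequences of compactness and invariant integration.
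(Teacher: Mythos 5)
Your approach is at heart the same as the paper's: both proofs exploit the fact that the isotropy elements form a genuine \emph{global} compact Lie group (your $K$, the paper's $H_0$), and both use conjugation by elements of this compact group to produce a continuous homomorphism from it onto $Ad_{\hat{H}}$. The paper spells out the passage from the local conjugation in $U_G$ to a globally defined map $F_h$ on $\hat{G}$ by writing every $h\in H_0$ as a finite product $h_1\cdots h_n$ of elements of $U_H$ and checking that conjugation respects this decomposition; your construction of $\rho\colon K\to GL(\mathfrak{g})$ via conjugation of germs is the same idea, stated more briefly. That part of your sketch is fine, though it glosses over the well-definedness that the paper handles explicitly.

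The genuine error is in your final deduction. You conclude compactness from: ``$\mathrm{Ad}(\hat{H})\subset O(\mathfrak{g},\langle\cdot,\cdot\rangle)$, and being a connected Lie subgroup of this compact group with Lie algebra $\mathrm{ad}(\mathfrak{h})$, it follows that $\mathrm{Ad}(\hat{H})$ is compact.'' This inference is false: a connected Lie subgroup of a compact Lie group need not be closed (an irrational one-parameter subgroup of a torus is a connected Lie subgroup of the compact torus but is dense, hence non-compact). Containment in $O(\mathfrak{g})$ only gives precompactness. In fact, the entire averaging detour is unnecessary once you have $\rho$: since $K$ is compact and $\rho$ is a continuous homomorphism, $\rho(K)$ is a compact connected Lie subgroup of $GL(\mathfrak{g})$ with Lie algebra $d\rho(\mathfrak{h})=\mathrm{ad}(\mathfrak{h})$; but $\mathrm{Ad}(\hat{H})$ is also the connected (integral) subgroup of $GL(\mathfrak{g})$ with Lie algebra $\mathrm{ad}(\mathfrak{h})$, hence $\mathrm{Ad}(\hat{H})=\rho(K)$ is compact. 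This is exactly how the paper concludes, exhibiting $Ad_{\hat{H}}$ as the continuous surjective image of the compact $H_0$. So the route is salvageable, but you should replace the ``connected Lie subgroup of a compact group'' step with the direct image argument.
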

	
	\begin{proof}
		Observe that the stabilizer $H$ of $x_0$ is a compact global group and moreover by \autoref{Thm: Hilbert smith} is a Lie group. This means that the identity component $H_0$ is a compact, connected Lie group. Hence for any $h \in H_0$ there exist ${h}_1, \ldots , h_n \in U_H$ and $n \in \mathbb{N}$ such that $h={h}_1 \cdots {h}_n$. Similarly, if $i: U_G \mapsto \hat{G}$ denotes the local isomorphism and $\hat{U}_H:= i(U_H)$, then for every  $\hat{h} \in \hat{H}$ there are $\hat{h}_1, \ldots , \hat{h}_n \in \hat{U}_H$ and $n \in \mathbb{N}$ such that $\hat{h}=\hat{h}_1 \cdots \hat{h}_n$.\par
		Analogously to the proof of \autoref{lem: Small groups are Lie} one can define a neighborhood of the identity $U_0$ in $U_G$ satisfying $h U_0 h^{-1} \subset U_0 $ such that all products between elements in $U_0$ are defined. Using the invariance property of $U_0$ one can prove inductively for $g \in U_0$, $n \in \mathbb{N}$ and $h_1,\ldots, h_n \in U_0$ the identity $$ i ( h_1 \cdots h_n \cdot g  \cdot h_n^{-1} \cdots h_1^{-1})= i(h_1) \cdots i(h_n) \cdot i(g) \cdot i(h_n)^{-1} \cdots i(h_1)^{-1}.$$
		Observe that this property does not follow from the homomorphism property, since $n$-fold multiplication might not be defined.\par
		We have achieved the following: One can write conjugation with arbitrary elements in $H_0, \hat{H}$ in terms of elements of $U_H$ and $\hat{U}_H$. This makes it possible to relate $Ad_{H_0}$, which is known to be compact, to $Ad_{\hat{H}}$.\par
		For $h \in H_0$ write $h= {h}_1 \cdots {h}_n$ and define the map $$F_h:\hat{G} \rightarrow \hat{G}; g\mapsto  i({h}_1) \cdots i({h}_n) \cdot i(g) \cdot  i(h_n)^{-1} \cdots i({h}_1)^{-1}.$$
		By the formula above $F_h$ is well-defined. Consider the map $$f:H_0 \rightarrow Ad_{\hat{H}}, h \mapsto d_e(F_h).$$
		It remains to show that $f$ is continuous and surjective. Surjectivity follows from the formula above. For continuity of $f$ observe that it is sufficient to prove continuity in $e$. Then we have $f= Ad \circ i$, which is a composition of continuous maps. This finishes the proof.
	\end{proof}
	With \autoref{lem:reductive} and the remark preceding it we obtain:
	
	\begin{cor}
		\label{cor: Riemannian metric on quotient}
		There is a $U_G$-invariant Riemannian metric $\hat{g}$ on $U_G/U_H$.
	\end{cor}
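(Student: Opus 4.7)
The plan is to adapt the classical averaging construction for Riemannian homogeneous spaces to our local setting, leveraging the compactness of $Ad_{\hat{H}}$ established in \autoref{lem:reductive}. The Corollary is essentially the local analogue of \cite{MR2394158}[Proposition 3.16]; the only extra care needed concerns the transition from the global Lie group $\hat{G}$, where averaging naturally lives, back to the local group $U_G$ whose quotient we want to equip with a metric.

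First I would produce an $Ad_{\hat{H}}$-invariant inner product on $\mathfrak{g}/\mathfrak{h}$. Since $\overline{Ad_{\hat{H}}} \subset GL(\mathfrak{g})$ is compact and preserves $\mathfrak{h}$, its action descends to $\mathfrak{g}/\mathfrak{h}$; averaging an arbitrary background inner product $\langle\cdot,\cdot\rangle_0$ on $\mathfrak{g}/\mathfrak{h}$ against the normalized Haar measure on $\overline{Ad_{\hat{H}}}$ then yields the desired $Ad_{\hat{H}}$-invariant inner product $\langle\cdot,\cdot\rangle$. Via the local isomorphism $i:U_G\to\hat{G}$, which induces the identity on Lie algebras and sends $U_H$ into $\hat{H}$, this inner product is equally $Ad_{U_H}$-invariant under the local isotropy action on $\mathfrak{g}/\mathfrak{h} \cong T_{[e]}(U_G/U_H)$.

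Next I would propagate $\langle\cdot,\cdot\rangle$ to a Riemannian metric $\hat{g}$ on all of $U_G/U_H$ by forcing it to be $U_G$-invariant: given $[g] \in U_G/U_H$, choose a representative $g$ for which the left-translation $L_g$ on the local quotient is defined near $[e]$, and declare $\hat{g}_{[g]}$ to be the pushforward of $\langle\cdot,\cdot\rangle$ under $dL_g$. Independence of the chosen representative is precisely the $Ad_{U_H}$-invariance established above, since replacing $g$ by $gh$ with $h\in U_H$ precomposes $dL_g$ with the tangent map of $L_h$ at $[e]$, and the latter acts on $\mathfrak{g}/\mathfrak{h}$ as $Ad_h$. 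Smoothness of $\hat{g}$ is automatic from this pointwise formula, and $U_G$-invariance is built into the construction.

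The main obstacle I expect is purely bookkeeping rather than conceptual: partial products in the local group $U_G$ are only defined on sufficiently small neighborhoods, so the recipe above must be interpreted over an open cover of $U_G/U_H$, shrinking $U_G$ and $U_H$ if needed so that the translations and compositions appearing in the definition of $\hat{g}_{[g]}$ are all defined and mutually compatible on overlaps. Once this bookkeeping is in place, $\hat{g}$ is the required smooth $U_G$-invariant Riemannian metric on $U_G/U_H$.
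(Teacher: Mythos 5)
Your proposal is correct and takes the same approach as the paper: the paper deduces the corollary by invoking the classical averaging construction for reductive homogeneous spaces (citing \cite{MR2394158}[Proposition 3.16]), which applies once the compactness of $Ad_{\hat{H}}$ is established in \autoref{lem:reductive}. You have simply spelled out the averaging and left-translation steps that the paper leaves implicit.
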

	It follows from the next lemma, that the canonical homeomorphism $h:(M,d_{g_0}) \rightarrow (U_G/U_H,d_{\hat{g}})$ is a Lipschitz map.
	
	\begin{lem}[$f$ is Lipschitz]\label{lem: natural map lipschitz}
		Let $d^*$ be an intrinsic metric on $U_G/U_H$, which is $U_G$-invariant and let $g$ be a smooth $U_G$-invariant
		Riemannian metric  on $U_G/U_H$.
		Then for some constant $C>0$ we have $d_g\leq C d^*$. 
	\end{lem}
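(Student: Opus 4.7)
The strategy is to establish the Lipschitz bound locally at the base coset $o:=eU_H$ and then propagate it globally using $U_G$-invariance together with the intrinsic structure of $d^*$. To set up, I would fix $\varepsilon>0$ so small that the closed $g$-ball $\overline{B}^g_\varepsilon(o)$ is compact in $U_G/U_H$; this is possible since $g$ is a smooth Riemannian metric. Its boundary $S:=\partial B^g_\varepsilon(o)$ is then compact and disjoint from $o$. Because $d^*$ is inherited from $d_{g_0}$ through the canonical homeomorphism $h$, it induces the manifold topology on $U_G/U_H$, and so $\delta:=d^*(o,S)>0$.

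The local step is to show that $d^*(o,p)<\delta$ forces $d_g(o,p)\le\varepsilon$. Using intrinsicality of $d^*$, I would pick a curve $\gamma$ from $o$ to $p$ with $L^*(\gamma)<\delta$. If $\gamma$ were to leave $B^g_\varepsilon(o)$ for the first time at some parameter $t_0$, then $L^*(\gamma|_{[0,t_0]})\ge d^*(o,\gamma(t_0))\ge\delta$, contradicting the length bound. Hence $\gamma\subset\overline{B}^g_\varepsilon(o)$, which gives $d_g(o,p)\le\varepsilon$. Thus $d_g(o,p)\le C\,d^*(o,p)$ with $C:=\varepsilon/\delta$ whenever $d^*(o,p)<\delta$, and by $U_G$-invariance of both metrics together with transitivity of the $U_G$-action on $U_G/U_H$, the same bound upgrades to $d_g(x,y)\le C\,d^*(x,y)$ for any pair with $d^*(x,y)<\delta$.

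For arbitrary $x,y$ I would invoke intrinsicality of $d^*$ once more: given $\eta>0$, pick a curve $\gamma$ from $x$ to $y$ with $L^*(\gamma)<d^*(x,y)+\eta$ and subdivide it at breakpoints $x=x_0,\dots,x_n=y$ so that each piece has $L^*$-length less than $\delta$. Applying the local bound to each consecutive pair and telescoping gives
\[ d_g(x,y)\le\sum_{i=0}^{n-1}d_g(x_i,x_{i+1})\le C\sum_i L^*\bigl(\gamma|_{[x_i,x_{i+1}]}\bigr)=C\,L^*(\gamma)<C\bigl(d^*(x,y)+\eta\bigr), \]
and letting $\eta\to 0$ finishes the proof.

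The step I expect to be the main obstacle is establishing $\delta>0$: this needs the $d^*$-topology to be no coarser than the manifold topology near $o$, which must either be argued from the explicit identification of $d^*$ with $d_{g_0}$ under the canonical homeomorphism $h$, or derived abstractly by combining local compactness of $g$-balls with the $U_G$-equivariance of both metrics. Once this topological comparison is in hand, the rest of the argument is the standard length-space/compactness routine sketched above.
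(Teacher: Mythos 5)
Your high-level strategy --- establish a local comparison near $o$ and then globalize by $U_G$-invariance plus subdividing curves and telescoping --- is the right one and matches the approach the paper refers to (it simply cites Berestovskii's Lemma~1 and hints at ``exponential map is a diffeomorphism near the origin, triangle inequality, transitivity''). The globalization step is carried out correctly, and your remark about needing $\delta>0$ (i.e.\ the $d^*$-topology being no coarser than the manifold topology) is the right thing to flag and is indeed satisfied in the paper's application.

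The genuine gap is in the local step. You correctly establish the ball-containment statement: $d^*(o,p)<\delta$ implies $d_g(o,p)\le\varepsilon$. But you then declare ``Thus $d_g(o,p)\le C\,d^*(o,p)$ with $C:=\varepsilon/\delta$ whenever $d^*(o,p)<\delta$.'' This does not follow. If $d^*(o,p)=\delta/10$, the ball containment gives only $d_g(o,p)\le\varepsilon$, whereas the claimed multiplicative bound would demand $d_g(o,p)\le\varepsilon/10$ --- nothing you have shown controls $d_g$ at that finer scale. The multiplicative local Lipschitz estimate is precisely what feeds the telescoping inequality $\sum_i d_g(x_i,x_{i+1})\le C\sum_i L^*(\gamma|_{[x_i,x_{i+1}]})$, so the step you assert is essentially equivalent to the lemma itself: the gap sits at the crux rather than at a peripheral point.

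Concretely, what your argument actually yields is the following. Subdividing $\gamma$ into pieces of $L^*$-length at most $\delta/2$ and applying ball containment to each piece gives $d_g(x,y)\le n\varepsilon$ with $n\le 2L^*(\gamma)/\delta+1$, hence in the limit $d_g(x,y)\le (2\varepsilon/\delta)\,d^*(x,y)+\varepsilon$. This is a large-scale Lipschitz bound (fine once $d^*(x,y)\ge\delta/2$, say) but it degenerates to the trivial $d_g(x,y)\le\varepsilon$ as $d^*(x,y)\to 0$, which is not the statement $d_g\le C\,d^*$. To close the gap one needs a genuinely local, scale-invariant comparison near $o$, and that is what the paper's reference to the exponential map being a diffeomorphism near the origin (together with transitivity) is for: Berestovskii's Lemma~1 produces the multiplicative local bound from that smooth-chart input, and it cannot be obtained from ball containment and intrinsicality alone.
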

	\begin{proof}
		It is possible to apply almost the same 
		arguments as in  \cite{MR997464}[Lemma 1].
		The proof uses the fact that the
		exponential map for smooth metric is a diffeomorphism near the origin,  triangle inequality
		and transitivity of the group action.
	\end{proof}
	
	Denote by $\varphi_1 : \Omega_1 \subset \mathbb{R}^n \rightarrow O,\varphi_2:\Omega_2 \subset \mathbb{R}^n \rightarrow U_G/U_H $ charts around $x_0$, $eH$ respectively. In view of \autoref{lem: Maps are bilip}, \autoref{lem: natural map lipschitz} we have that the coordinate expression $F:=  \varphi_2^{-1} \circ f \circ  \varphi_1$ is a Lipschitz map between open subsets of $\mathbb{R}^n$. By the Rademacher theorem $F$ is differentiable almost everywhere and we have the area formula \cite{MR1158660}[3.3.2]
	$$ \int_{\Omega_1} \det(dF_x) dx = \int_{\mathbb{R}^n} \mathcal{H}^0(\Omega_1 \cap F^{-1}(z)) \, dz = \int_{F(\Omega_1)} 1 \, dz>0,$$
	where $\mathcal{H}^0$ denotes the counting measure. The second equality comes from the fact that $F$ is a homeomorphism.
	This implies that there is a point such that $dF$ is an isomorphism, without loss of generality at $\varphi_1^{-1}(x_0)$, then $df$ is an isomorphism at $x_0$. Define a scalar product $g^*$ at $eH$ by the formula 
	$$g^*_{eH}( v,w) := g_0
	( (df)^{-1}_{eH} v, (df)^{-1}_{eH}  w )(x_0).$$\par
	Observe that one can define a smooth Riemannian metric $g^*$ by 
	$$ g^*_{{gH}}(v'  , w' ):= g^*_{eH}( dL_{g^{-1}} v' , dL_{g^{-1}} w' ) .$$
	The expression above is well-defined, since $g^*_{eH}$ is 
	$Ad_{H}$-invariant. This comes from the fact that it is defined in terms of a linear isomorphism, whose original map is adapted to the group action.\par

	We are now ready to prove the main theorem
	\begin{proof}[Proof of Main Theorem]

		First we want to show that our construction implies that the map $h: (O,d_0)\rightarrow(U_G/U_H,d_{g*}) $ is an isometry.
		
		follows
		\autoref{Equality condtion for Riem.metrics}.\par
		We have now proven a local version of the main theorem. For the  general statement we construct a smooth chart for some neighbourhood
		of an arbitrary point using the local homogenity condition and
		the local statement above. Then we observe that 
		transition functions are smooth since
		by the Meyers-Steenrod theorem isometries between smooth manifolds are actually smooth maps.                   
	\end{proof}
	\begin{lem}[Equality condition for homogeneous Riemannian metrics]\label{Equality condtion for Riem.metrics}
		Let $(\Omega,g)$ $(\Omega',g')$
		be locally homogeneous  $C^0$-Riemannian manifolds
		and $h:\Omega\to \Omega'$ be a homeomorphism satisfying the following conditions
		\begin{enumerate}
			\item The map $h$ respects the local isometries of $\Omega$ in the following sense:
			For any $x,y\in \Omega$ there exist open neighborhoods $x \in U_x, y \in U_y$ and an isometry $f_{xy}:U_x\to U_y$ such that
			$h\circ f_{xy}\circ h^{-1}:h(U_x)\to h(U_y)$ is an isometry as well.
			\item There exists a point $p \in \Omega$ such that the map $h$ is differentiable at $p$ and $d_ph$ is an isometry.
		\end{enumerate}
		Then the map $h: (\Omega,d_{g})  \rightarrow (\Omega',d_{g'}) $ is an isometry, where $d_g,d_{g'}$ denotes the metrics induced by $g,g'$ respectively.
	\end{lem}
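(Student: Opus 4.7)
The plan is to prove that $h$ preserves length of rectifiable curves, from which the metric isometry $d_{g'}(h(x),h(y))=d_g(x,y)$ follows at once, because $d_g$ and $d_{g'}$ are induced by Riemannian lengths and $h$ is a homeomorphism. The starting point is an infinitesimal estimate at $p$: using continuity of the Riemannian metrics together with Condition (2), I would first show
\[ d_g(y,p)=\|y-p\|_{g_p}\bigl(1+o(1)\bigr),\qquad d_{g'}(h(y),h(p))=\|y-p\|_{g_p}\bigl(1+o(1)\bigr) \]
as $y\to p$ in a coordinate chart around $p$. For the first identity, the upper bound is the $g$-length of the straight-line path, and the lower bound compares the $g$-length of an arbitrary short path with its length in the frozen scalar product $g_p$, which in turn dominates the Euclidean $\|\cdot\|_{g_p}$-distance. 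For the second identity, differentiability of $h$ at $p$ gives $h(y)-h(p)=d_ph(y-p)+o(|y-p|)$, and the assumption that $d_ph$ is a linear isometry between $(T_p\Omega,g_p)$ and $(T_{h(p)}\Omega',g'_{h(p)})$ yields $\|h(y)-h(p)\|_{g'_{h(p)}}=\|y-p\|_{g_p}+o(|y-p|)$, after which the same comparison argument applied to $g'$ gives the claim. Consequently, for every $\varepsilon>0$ there is $\delta>0$ such that $\bigl|d_{g'}(h(y),h(p))/d_g(y,p)-1\bigr|<\varepsilon$ whenever $0<d_g(y,p)<\delta$.

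The crucial second step is to propagate this estimate, with the same $(\varepsilon,\delta)$, to every other point $q\in\Omega$. Given $q$, Condition (1) combined with local homogeneity yields a local isometry $f_{qp}\colon U_q\to U_p$ sending $q$ to $p$ such that $\tilde f_{qp}:=h\circ f_{qp}\circ h^{-1}$ is a metric isometry of $(\Omega',d_{g'})$. Because $f_{qp}$ and $\tilde f_{qp}$ preserve distances exactly, one obtains
\[ \frac{d_{g'}(h(x),h(q))}{d_g(x,q)}=\frac{d_{g'}(h(f_{qp}(x)),h(p))}{d_g(f_{qp}(x),p)}, \]
and the right-hand side ratio is controlled by the $p$-estimate because $d_g(f_{qp}(x),p)=d_g(x,q)<\delta$. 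Thus the infinitesimal scaling is $\varepsilon$-close to $1$ uniformly in $q\in\Omega$ on the scale $\delta$.

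Length preservation then follows directly: for a rectifiable curve $\gamma\colon[0,1]\to\Omega$ and any partition $0=s_0<\dots<s_N=1$ of mesh less than $\delta$, summing the uniform ratio estimate gives
\[ (1-\varepsilon)\sum_i d_g(\gamma(s_i),\gamma(s_{i+1}))\le\sum_i d_{g'}(h(\gamma(s_i)),h(\gamma(s_{i+1})))\le(1+\varepsilon)\sum_i d_g(\gamma(s_i),\gamma(s_{i+1})). \]
Taking the supremum over such partitions yields $(1-\varepsilon)L_g(\gamma)\le L_{g'}(h\circ\gamma)\le(1+\varepsilon)L_g(\gamma)$, and letting $\varepsilon\to 0$ gives $L_g(\gamma)=L_{g'}(h\circ\gamma)$. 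Since $h$ is a homeomorphism and both $d_g,d_{g'}$ are intrinsic, this upgrades to the isometry property. The main obstacle is the uniform-in-direction asymptotic identity $d_g(y,p)=\|y-p\|_{g_p}(1+o(1))$ for a merely continuous metric; once this elementary consequence of continuity of $g$ is in hand, the transfer via $f_{qp}$ and $\tilde f_{qp}$ is purely formal and requires no differentiability of $h$ or of the local isometries off the single point $p$.
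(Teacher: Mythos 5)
Your proof follows essentially the same route as the paper: you establish an infinitesimal distance-ratio estimate at the differentiability point $p$ (the paper's $(1+\varepsilon)$-bilipschitz charts are precisely your comparison with the frozen scalar product $g_p$ combined with condition (2)), transport it to every other point via the local isometries $f_{qp}$ and $\tilde f_{qp}$ provided by condition (1), and sum over fine partitions to conclude that $h$ preserves lengths of rectifiable curves. This is the paper's argument, with your write-up merely being slightly more explicit in the derivation of the estimate at $p$.
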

	
	\begin{proof} 
		It is sufficient to show that the length of rectifiable
		curves is the same with regard to both Riemannian metrics. That means $L_{d_g}(\gamma)=L_{d_{g'}}( h \circ \gamma)$ for every rectifiable curve $\gamma$ in $M$.\par
		
		Using the $C^0$-Riemannian manifold structure we can construct for every $\varepsilon>0$ an $(1+\varepsilon)$-bilipschitz chart around $p$ and $h(p)$ respectively. From this together with condition (2) we can establish for all $x$ in a neighborhood of $p$ the following estimate 
		\begin{equation}\label{eq:Equality condition}
		\vert d_g(p,x) - d_{g'}(h (p),h (x)) \vert < \varepsilon \cdot d_{g}(p,x) .
		\end{equation}
		More precisely: If $h$ is a map between two open subsets of $\mathbb{R}^n$, which is differentiable at a fixed point $p$, then the above estimate is obvious. Since our $(1+\varepsilon)$-bilipschitz charts almost not distort distances the estimate holds in the general situation as well.\par
		Denote by $\gamma:[0,1]\rightarrow \Omega$ a rectifiable curve and by $0=z_0 < z_1 < \ldots < z_N = 1$ a partition of $[0,1]$. We will show $$L(\gamma) \geq \sum_{i=0}^{N-1} d_{g'}(h \circ \gamma(z_i),h \circ \gamma(z_{i+1})),$$ which implies $L(\gamma)\geq L(h \circ \gamma)$. Indeed from assumption (1) of \autoref{Equality condtion for Riem.metrics} and inequality (\ref{eq:Equality condition}) we have for any $\varepsilon>0$ and any $0 \leq i \leq N-1$ the inequality $$d_{g'}(h \circ \gamma(z_i), h \circ \gamma(z_{i+1}) \leq (1+\varepsilon) \cdot d_g (\gamma(z_i), \gamma(z_{i+1})).$$
		Since $\varepsilon$ was arbitrary summation implies the claim.\par
		We have established that $h \circ \gamma$ is rectifiable, therefore by a similar argument as above we obtain $L(h) \geq L(h \circ \gamma) $, which yields the result.

	\end{proof}

	\bibliography{bibliography}
\end{document}